\def\R{{\mathbb R}}
\newtheorem{theorem}{Theorem}   
\newtheorem{cor}[theorem]{Corollary}
\newtheorem{corr}[theorem]{Corollary}
\newenvironment{proof}{\medskip \par \noindent {\bf Proof}\ }{\hfill 
$\Box$ 
                       \medskip \par}
\def\R{{\mathbb R}}
\begin{document}
\overfullrule=0pt
\baselineskip=18pt
\baselineskip=24pt
\font\tfont= cmbx10 scaled \magstep3
\font\sfont= cmbx10 scaled \magstep2
\font\afont= cmcsc10 scaled \magstep2
\title{\tfont Functional analysis behind a Family of Multidimensional Continued Fractions:\\Part II }
\bigskip
\author{Ilya Amburg\footnote{Center for applied Mathematics, Cornell University, Ithaca, NY 14853; \texttt{ia244@cornell.edu}}\mbox{  } and Thomas Garrity\footnote{Williams College Department of Mathematics, Williamstown, Ma  01267; \texttt{tgarrity@williams.edu}}}

\date{}
\maketitle

\begin{abstract}

This paper is a direct continuation of ''Functional analysis behind a Family of Multidimensional Continued Fractions: Part I,''  in which we started the exploration of the functional analysis behind the transfer operators for triangle partition maps, a family that includes many, if not most, well-known multidimensional continued fraction algorithms. 
This allows us now to  find eigenfunctions of eigenvalue 1 for transfer operators associated with select triangle partition maps on specified Banach spaces. We proceed to prove that the transfer operators, viewed as acting on one-dimensional families of Hilbert spaces, associated with select triangle partition maps are nuclear of trace class zero. We finish by deriving Gauss-Kuzmin distributions associated with select triangle partition maps.

\end{abstract}

\section{Introduction}
For a general introduction to triangle partition maps and their associated transfer operators, please see the introduction to \cite{amburg-Garrity I}.  In particular, we will be using the notation of that paper.  Here we are interested in questions about the spectrum of the transfer operators.  For the traditional continued fraction algorithm and the associated transfer operator of the Gauss map, there is a rich history of studying such questions.  

Each multidimensional continued fraction in the family of triangle partition maps provides a description of each point $(\alpha, \beta)$ in $\triangle =\{(x,y)\in \R^2:0 < y < x<1\}$ via an infinite sequence of nonnegative integers (almost everywhere), in analog to a real number's continued fraction expansion.  Asking about the statistics of such sequences for each particular algorithm is natural and important.  For traditional continued fractions, these types of statistics fall under the rhetoric of Gauss-Kuzmin statistics.  The key technical tool is in finding the invariant measure of the Gauss map and then in understanding the spectrum of the corresponding transfer operator.  In particular, what is needed to be shown is that the largest eigenvalue of the operator is one, with a one-dimensional eigenspace.  Without using this language, this is what is done in theorem 33 in Khinchin's classic \cite{Khinchin}.  For us, we will be trying to generalize the work of Mayer and Roepstorff \cite{Mayer, Mayer-Roepstorff1988}.  The important question is first in finding the relevant Banach or Hilbert space of functions on the domain and then in determining the leading eigenvalue of the transfer operator.  These are delicate questions.

For traditional continued fractions, in \cite{Mayer-Roepstorff1988} , Mayer and Roepstorff do this for the Banach space of continuous functions on the unit interval and in \cite{Mayer}, do this for the Hilbert space $L^2[0,1]$.   Further, in the Hilbert space paper, they show that the transfer operator is nuclear of trace class zero.  (Their work is also described in \cite{Iosifescu-Kraaikamp1}.)

As discused in \cite{Garrity15}, there are significant technical reasons that the early work of Khinchin and the work of Mayer and Roepstorff  cannot be effortlessly extended to triangle partition maps.  One problem is to find the appropriate Banach spaces of functions for each map. We will see in section \ref{banach} that for some triangle partition maps there are natural Banach spaces of functions, that for $18$ of these  the largest eigenvalue of the associated transfer operator is one, and that for some of these  the dimension of the corresponding eigenspace is one. The Hilbert space approach is more challenging, as few of the natural choices for eigenfunctions with eigenvalue one will be in the $L^2$ space of square integrable functions on the domain. But we will see  in section \ref{hilbert} that a number of the triangle partition maps' transfer operators are nuclear operator of trace class zero when viewed as acting on naturally occurring one-dimensional families of Hilbert spaces. (We find that the need to think of the transfer operator as a family of nuclear operators of trace class zero as quite interesting.) In section \ref{GK}, we present the Gauss-Kuzmin statistics for select triangle partition maps. In section \ref{conclusion}, we discuss future directions for work.  Appendix \ref{app:TRIPmaps} is a table of the polynomial-behavior triangle maps, while appendix \ref{app:formoftransfer} is a table of the transfer operators for these polynomial-behavior triangle partition maps.

  For background on earlier work, see  Hensley\cite{ Hensley}, Iosifescu and Kraaikamp \cite{Iosifescu-Kraaikamp1}, Kesseb\"{o}hmer, Munday and Stratmann  \cite{Kessebohmer-Munday-Stratmann}, Khinchin \cite{Khinchin}, Rockett and Szusz \cite{Rockett-Szusz}, Schweiger \cite{Schweiger3} and Isola \cite{Isola02}.  In \cite{Garrity15}, this was done for the triangle map, where it was shown that much of the work of Mayer and Roepstorff \cite{Mayer, Mayer-Roepstorff1988} has nontrivial analogs for the triangle map. Building on an earlier version of \cite{Garrity15} , there is also the interesting work of Bonanno, Del Vigna and Munday \cite{Bonanno-Del Vigna-Munday}.  Further, there is the work of Fourgeron and Skripchenko \cite{Fougeron-Skripchenko} on the Lyapunov exponents of the triangle map.   again, the goal of this paper is to see which of these analogs hold for various members of the family of triangle partition maps.

Part of this paper stems from \cite{amburg}.

We would like to thank the referees for many useful comments.

\section{Leading Eigenvalues for Select Transfer Operators}
\label{banach}

All of the notation comes from \cite{amburg-Garrity I} .

It is well-known that the transfer operator for the Gauss map has leading eigenvalue one with a one-dimensional eigenspace when acting on the Banach space of bounded continuous functions on the unit interval, as in work of Mayer and Roepstorff  \cite{Mayer-Roepstorff1988}.  Similarly, in \cite{Garrity15}, it is shown that  the transfer operator for  $T_{e,e,e}$ has leading eigenvalue one with a one-dimensional eigenspace in the Banach space of continuous functions $f(x,y)$ on $\triangle $ where $|xf(x,y)|$ is bounded.  What about the other triangle partition maps?

We will show for $47$ triangle partition maps\footnote{In this paper we present new results for all maps but $T_{e,e,e},$ for which the analogous results are presented in \cite{Garrity15}; nonetheless, we include the results for $T_{e,e,e}$ for completeness.} that there are similar natural  Banach spaces, which suggests that the corresponding transfer operators may be good candidates for having largest eigenvalue one. For $18$ of these $47$ transfer operators, we will explicitly find an eigenfunction with eigenvalue one. When we restrict attention to those triangle partition maps which are known to be ergodic, we can finally conclude for these that the eigenspace with eigenvalue one is one-dimensional.

\subsection{Transfer Operators as Linear Maps on appropriate Banach Spaces}
The goal here is to find  Banach spaces for select $\mathcal{L}_{T_{\sigma,\tau_0,\tau_1}}$ which might be good analogs to the Banach space found in \cite{Garrity15}.  The Banach spaces will all have the following form.  Let $\mathcal{C}(\triangle)$ be the space of continuous functions on the open triangle $\triangle.$
Each of the Banach spaces will be 
$$W=\{f(x,y)\in \mathcal{C}(\triangle): \exists C \in \R : |g(x,y) f(x,y)|<C, \forall (x,y) \in \bigtriangleup \},$$
where $ g(x,y)\in \mathcal{C}(\triangle)$ is a fixed function. 

Each $W$ is a Banach space under norm
$$\|f(x,y)\|=\sup_{(x,y)\in \bigtriangleup} |g(x,y) f(x,y)|.$$

The following theorem is why these are natural Banach spaces to study:

\begin{theorem}
\label{theorem:Banach}
For functions $g(x,y)$ as shown in the table below, the transfer operators $\mathcal{L}_{T_{X}},$ with $X$ representing each of the permutation triplets appearing in the table, are linear maps from $W$ to $W.$ 

\end{theorem}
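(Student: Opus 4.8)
The plan is to verify, uniformly over the $47$ permutation triplets $X$, that $\mathcal{L}_{T_X}$ maps $W$ into $W$; that is, that there exists a constant $C'$ with $|g(x,y)\,(\mathcal{L}_{T_X}f)(x,y)| < C'$ on $\triangle$ whenever $|g\,f|$ is bounded on $\triangle$. The transfer operator has the general shape
$$
(\mathcal{L}_{T_X}f)(x,y)=\sum_{k}\bigl|\det D\psi_{X,k}(x,y)\bigr|\; f\!\bigl(\psi_{X,k}(x,y)\bigr),
$$
where the $\psi_{X,k}$ are the inverse branches of $T_X$ (these are listed, together with the Jacobian factors, in the table of Appendix \ref{app:formoftransfer}). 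So the first step is to substitute the hypothesis $|f(u,v)|\le C/|g(u,v)|$ and reduce the problem to showing that for each branch,
$$
\frac{|g(x,y)|\,\bigl|\det D\psi_{X,k}(x,y)\bigr|}{\bigl|g(\psi_{X,k}(x,y))\bigr|}
$$
is bounded on $\triangle$, and (when the sum over $k$ is infinite, as for the branches indexed by a nonnegative integer) that the resulting series converges uniformly. Since every $g$ in the table is an explicit rational/monomial-type expression in $x$ and $y$, and every $\psi_{X,k}$ is an explicit rational map, each such quotient is an explicit rational function; the task is to check it extends continuously (hence boundedly) to the closure $\overline{\triangle}$, or at worst has only removable singularities there.

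The key steps, in order: (1) recall from \cite{amburg-Garrity I} the explicit form of $\mathcal{L}_{T_X}$ for each $X$, organizing the $47$ maps into the groups that share a common combinatorial structure so the verification can be done group-by-group rather than map-by-map; (2) for each group, write down the candidate $g$ from the table and form the branch quotients above; (3) bound each quotient on $\overline{\triangle}$, paying attention to the boundary faces $\{x=1\}$, $\{y=0\}$, $\{y=x\}$ and the corners, where $g$ or the Jacobian may vanish or blow up — the point is that the vanishing/blowing-up of $g(x,y)$ is designed to cancel against the Jacobian and the $1/|g(\psi_{X,k})|$ factor; (4) for the maps with infinitely many inverse branches, estimate the tail of the series (typically the $k$-th term decays like $k^{-2}$ or faster, as in the classical Gauss map) to get uniform convergence, which also shows the sum is again continuous on $\triangle$; (5) conclude that $\mathcal{L}_{T_X}f\in\mathcal{C}(\triangle)$ and satisfies the weighted-boundedness condition, i.e.\ lies in $W$, and that $\mathcal{L}_{T_X}$ is linear (immediate from its formula) and bounded.

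The main obstacle I anticipate is step (3) combined with the sheer bookkeeping of step (1): there is no single slick inequality: one must confirm, for each of the $47$ maps (or each structural group), that the particular $g$ listed really does produce cancellation on every boundary face and at every corner of $\triangle$. The subtlety is that an inverse branch $\psi_{X,k}$ can carry an interior point of $\triangle$ toward a boundary face, so even though $f$ is only assumed bounded after weighting by $g$, the weight $|g(\psi_{X,k}(x,y))|$ appearing in the denominator can degenerate, and one needs the numerator's Jacobian factor to vanish to matching order. Verifying this matching order is essentially why the functions $g$ in the table take the specific forms they do, and checking it is the real content of the proof; the convergence estimate in step (4) is then a routine comparison with a $p$-series, closely parallel to the estimate for the Gauss map transfer operator in \cite{Mayer-Roepstorff1988}.
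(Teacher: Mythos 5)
Your proposal is correct and follows essentially the same route as the paper: reduce to showing that $\sum_{(a,b):T_X(a,b)=(x,y)}\left|g(x,y)/\bigl(g(a,b)\,|\mathrm{Jac}(T_X(a,b))|\bigr)\right|$ is bounded uniformly in $(x,y)$, using the explicit summands tabulated with the theorem, and then verify convergence by comparison with a $p$-series. The paper compresses your steps (3)--(5) into the phrase ``by standard arguments it is clear we have our bounds,'' so your write-up is simply a more explicit account of the same argument, including the boundary-cancellation bookkeeping the paper leaves implicit.
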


$$\begin{array}{l|c|c}
(\sigma, \tau_0, \tau_1)  & g(x,y)  & \mbox{Summand} \\
\hline
\hline
 (e,e,e) & x & \frac{x}{(k x+y+1)^2} \\ \hline
 (e,12,e) & (-x+y+1)^2 & -\frac{(x-y-1)^2}{(x k-y k-k+x-2 y-1)^2 (x k-y k-k-y-1)} \\ \hline
 (e,13,e) & (1-y)^2 & -\frac{(y-1)^2}{(y k-k+x-2) (y k-k+x-y-1)^2} \\ \hline
 (e,13,12) & (1-y)^2 & -\frac{(y-1)^2}{(y k-k+x-2) (y k-k+x-y-1)^2} \\ \hline
 (e,23,e) & x (1-y) & \frac{x-x y}{(k x-y+1) (k x+x-y+1)} \\ \hline
 (e,123,e) & (-x+y+1)^2 & -\frac{(x-y-1)^2}{(x k-y k-k+x-2) (x k-y k-k+2 x-y-2)^2} \\ \hline
 (e,132,e) & x (1-y) & \frac{x-x y}{(y k-k-x) (y k-k-x+y-1)} \\ \hline
 (e,132,12) & (1-y)^2 & -\frac{(y-1)^2}{(y k-k-x)^2 (y k-k-x+y-1)} \\ \hline
 (12,e,12) & x^2 & \frac{x^2}{(k x+y+1) (k x+x+y)^2} \\ \hline
 (12,12,12) & -x+y+1 & \frac{-x+y+1}{(-x k+y k+k+y+1)^2} \\ \hline
 (12,13,e) & (1-y)^2 & -\frac{(y-1)^2}{(y k-k+x-2) (y k-k+x-y-1)^2} \\ \hline
 (12,13,12) & (1-y) (-x+y+1) & \frac{(x-y-1) (y-1)}{(y k-k+x-2) (y k-k+x-y-1)} \\ \hline
 (12,23,12) & x^2 & \frac{x^2}{(k x+x-y+1) (k x+2 x-y)^2} \\ \hline
 (12,123,12) & (1-y) (-x+y+1) & \frac{(x-y-1) (y-1)}{(x k-y k-k+x-2) (x k-y k-k+y-1)} \\ \hline
 (12,132,e) & (1-y)^2 & -\frac{(y-1)^2}{(y k-k-x)^2 (y k-k-x+y-1)} \\ \hline
 (12,132,12) & (1-y)^2 & -\frac{(y-1)^2}{(y k-k-x)^2 (y k-k-x+y-1)} \\ \hline
 (13,e,13) & x^2 & \frac{x^2}{(k x+y+1) (k x-x+y+1)^2} \\ \hline
 (13,e,123) & x^2 & \frac{x^2}{(k x+y+1) (k x-x+y+1)^2} \\ \hline
 (13,12,13) & (-x+y+1)^2 & -\frac{(x-y-1)^2}{(x k-y k-k+x-2 y-1)^2 (x k-y k-k-y-1)} \\ \hline
 (13,13,13) & 1-y & \frac{1-y}{(y k-k+x-2)^2} \\ \hline
 (13,23,13) & x (1-y) & \frac{x-x y}{(k x-y+1) (k x+x-y+1)} \\ \hline
 (13,23,123) & x^2 & \frac{x^2}{(k x-y+1)^2 (k x+x-y+1)} \\ \hline
 (13,123,13) & (-x+y+1)^2 & -\frac{(x-y-1)^2}{(x k-y k-k+x-2) (x k-y k-k+2 x-y-2)^2} \\ \hline
 (13,132,13) & x (1-y) & \frac{x-x y}{(y k-k-x) (y k-k-x+y-1)} \\ \hline
 (23,e,23) & x (-x+y+1) & -\frac{x (x-y-1)}{(k x+y+1) (k x-x+y+1)} \\ \hline
 (23,12,23) & x (-x+y+1) & -\frac{x (x-y-1)}{(x k-y k-k-x) (x k-y k-k-y-1)} \\ \hline
  \end{array}$$
 $$\begin{array}{l|c|c}
 (23,12,132) & (-x+y+1)^2 & -\frac{(x-y-1)^2}{(x k-y k-k-x)^2 (x k-y k-k-y-1)} \\ \hline
 (23,13,23) & (1-y)^2 & -\frac{(y-1)^2}{(y k-k+x-2) (y k-k+x+y-2)^2} \\ \hline
 (23,23,23) & x & \frac{x}{(k x+x-y+1)^2} \\ \hline
 (23,123,23) & (-x+y+1)^2 & -\frac{(x-y-1)^2}{(x k-y k-k+x-2) (x k-y k-k+y-1)^2} \\ \hline
 (23,132,23) & (1-y)^2 & -\frac{(y-1)^2}{(y k-k-x+y-1) (y k-k-x+2 y-1)^2} \\ \hline
 (123,e,132) & x^2 & \frac{x^2}{(k x+y+1) (k x+x+y)^2} \\ \hline
 (123,12,23) & -x+y+1 & \frac{x-y-1}{(x k-y k-k-x) (x k-y k-k-y-1)^2} \\ \hline
 (123,12,132) & -x+y+1 & \frac{x-y-1}{(x k-y k-k-x) (x k-y k-k-y-1)^2} \\ \hline
 (123,13,132) & (1-y) (-x+y+1) & \frac{(x-y-1) (y-1)}{(y k-k+x-2) (y k-k+x-y-1)} \\ \hline
 (123,23,132) & 1-y & \frac{1-y}{(-k x-x+y-1)^2} \\ \hline
 (123,123,23) & (-x+y+1)^2 & -\frac{(x-y-1)^2}{(x k-y k-k+x-2) (x k-y k-k+y-1)^2} \\ \hline
 (123,123,132) & (1-y) (-x+y+1) & \frac{(x-y-1) (y-1)}{(x k-y k-k+x-2) (x k-y k-k+y-1)} \\ \hline
 (123,132,132) & (1-y) (x-y+1) & \frac{(y-1) (-x+y-1)}{(y k-k-x+y-1) (y k-k-x+2 y-2)} \\ \hline
 (132,e,13) & x^2 & \frac{x^2}{(k x+y+1) (k x-x+y+1)^2} \\ \hline
 (132,e,123) & x (-x+y+1) & -\frac{x (x-y-1)}{(k x+y+1) (k x-x+y+1)} \\ \hline
 (132,12,123) & x (-x+y+1) & -\frac{x (x-y-1)}{(x k-y k-k-x) (x k-y k-k-y-1)} \\ \hline
 (132,13,123) & (1-y)^2 & -\frac{(y-1)^2}{(y k-k+x-2) (y k-k+x+y-2)^2} \\ \hline
 (132,23,13) & x^2 & \frac{x^2}{(k x-y+1)^2 (k x+x-y+1)} \\ \hline
 (132,23,123) & x (1-y) & \frac{x-x y}{(k x-y+1) (k x+x-y+1) (k x+2 x-y)} \\ \hline
 (132,123,123) & -x+y+1 & \frac{-x+y+1}{(x k-y k-k+x-2)^2} \\ \hline
 (132,132,123) & (1-y)^2 & -\frac{(y-1)^2}{(y k-k-x+y-1) (y k-k-x+2 y-1)^2}
\end{array}$$

\begin{proof} Linearity is clear.

Let $f(x,y) \in W$.  Then there is real constant $C$ such that  $||g(x,y)| f(x,y)|<C$ for all $(x,y)\in \triangle.$  To show that that $\mathcal{L}_{T_{X}} f(x,y) \in W$ we need to show
$$|g(x,y) \mathcal{L}_{T_{X}} f (x,y)| < C_1$$
for some real constant $C_1.$
We have 
\begin{eqnarray*}
\left|g(x,y)\mathcal{L}_{T_{X}} f(x,y)\right|
&=& |g(x,y)|\left|\sum_{(a,b):T_{X}(a,b)=(x,y)} \frac{1}{\left|\mbox{Jac}(T_{X}(a,b))\right|} f(a,b) \right|\\
&\leq& |g(x,y)|\sum_{(a,b):T_{X}(a,b)=(x,y)}\left|\frac{1}{g(a,b)\left|\mbox{Jac}(T_{X}(a,b))\right|} g(a,b) f(a,b)\right|\\
&\leq& |g(x,y)| C \sum_{(a,b):T_{X}(a,b)=(x,y)}  \left| \frac{1}{g(a,b)\left|\mbox{Jac}(T_{X}(a,b))\right|}\right|.\\
\end{eqnarray*}

Thus we  must show that the sum 
$$\sum_{(a,b):T_{X}(a,b)=(x,y)}  \left| \frac{g(x,y)}{g(a,b)\left|\mbox{Jac}(T_{X}(a,b))\right|}\right|$$
 converges, with a bound independent of $(x,y)$.  In the above table,  each of the summands
$$\left| \frac{g(x,y)}{g(a,b)\left|\mbox{Jac}(T_{X}(a,b))\right|}\right|$$ 
is given. 
By standard arguments it is clear we have our bounds.

\end{proof}



\bigskip


\subsection{Leading Eigenvalues}
We will show that for $18$ maps the transfer operator has leading eigenvalue one; for four of these maps, the corresponding eigenspace is one-dimensional.  We do not know if analogous results are true for other triangle partition maps, but would be surprised if these $18$ are the only ones with this behavior.

\begin{theorem} The table below lists those triangle partition maps whose transfer operators have an eigenfunction with eigenvalue one and explicitly lists the eigenfunction.
\end{theorem}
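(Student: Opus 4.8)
The proof is a verification carried out map by map: for each of the $18$ permutation triples in the table we take the eigenfunction $h(x,y)$ as given and check directly that $\mathcal{L}_{T_X}h = h$. Recall that
$$\mathcal{L}_{T_X}f(x,y)=\sum_{(a,b)\,:\,T_X(a,b)=(x,y)}\frac{1}{\left|\mathrm{Jac}\,T_X(a,b)\right|}\,f(a,b),$$
and that for generic $(x,y)\in\triangle$ the preimages are indexed by a single nonnegative integer $k$ (running from $0$ or $1$ depending on the map), with the $k$-th preimage $(a_k,b_k)$ and Jacobian weight $1/\left|\mathrm{Jac}\,T_X(a_k,b_k)\right|$ read off from Appendix \ref{app:formoftransfer}; indeed the quantities $|g(x,y)|\big/\!\left(|g(a_k,b_k)|\,\left|\mathrm{Jac}\,T_X(a_k,b_k)\right|\right)$ are precisely the summands tabulated in Theorem \ref{theorem:Banach}. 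So the first step, for each map, is to substitute the stated $h$ into this series, use the explicit formulas for $(a_k,b_k)$ and for the Jacobian, and reduce the $k$-th term to a single rational function of $k$ with $x,y$ as parameters.

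The heart of the argument is that this rational function of $k$ telescopes. Concretely, I would exhibit an explicit $G_k(x,y)$ with $G_k\to 0$ as $k\to\infty$ such that the $k$-th term equals $G_k-G_{k+1}$ (for the maps whose eigenfunction yields a denominator with a single linear factor in $k$) or, for the handful of maps where the denominator carries three affine factors in $k$, write the term as a telescoping combination of two shifted pieces; summing then collapses the series to $G_{k_0}(x,y)$, which one checks is exactly $h(x,y)$. This is the direct analogue of the classical Gauss-map identity $\sum_{k\ge1}\frac{1}{(k+x)(k+x+1)}=\frac{1}{1+x}$, and the eigenfunctions listed in the table are precisely those for which the denominators appearing after substitution factor compatibly with a partial-fraction decomposition in $k$. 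Absolute convergence of the series, hence the legitimacy of rearranging it into telescoping form, is already guaranteed by Theorem \ref{theorem:Banach}, which also gives that $|g(x,y)h(x,y)|$ is bounded, so that $h\in W$.

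Finally I would address the one-dimensionality claim for the four maps known to be ergodic. The operator $\mathcal{L}_{T_X}$ is positive on $W$, and the eigenfunction $h$ just produced is strictly positive on $\triangle$; since a positive eigenfunction of a positive operator necessarily has eigenvalue equal to the spectral radius, $1$ is the leading eigenvalue. The measure $h(x,y)\,dx\,dy$, suitably normalized, is then a $T_X$-invariant probability measure equivalent to Lebesgue measure, and by the Perron--Frobenius principle for transfer operators an absolutely continuous invariant density is unique up to a scalar exactly when the map is ergodic with respect to it; hence ergodicity of these four maps forces the eigenvalue-one eigenspace inside $W$ to be one-dimensional. I expect the only genuine difficulty to be the per-map bookkeeping in the telescoping step: the denominators that arise are products of two or three affine forms in $k$, and matching each to the correct telescoping decomposition, together with confirming that the $k\to\infty$ boundary term vanishes, is elementary but lengthy, with the three-factor cases requiring the most care.
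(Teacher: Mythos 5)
Your proposal is correct and matches the paper's approach: the paper's proof is simply the remark that ``these are calculations, in analog to the corresponding calculation in \cite{Garrity15},'' and the calculation you describe --- substituting $h$ into the explicit series from Appendix \ref{app:formoftransfer} and collapsing it by partial fractions in $k$, exactly as in $\sum_{k\ge 0}\frac{1}{(kx+y+1)((k+1)x+y+1)}=\frac{1}{x(y+1)}$ for $(e,e,e)$ --- is precisely that calculation. Your final paragraph on one-dimensionality is not needed for this statement (the paper handles that in separate later theorems via Lasota--Mackey), but it does not affect the correctness of the verification.
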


$$\begin{array}{c|c}
(\sigma, \tau_0, \tau_1) & \mbox{eigenfunction} \\
\hline
\hline
 (e,e,e) & \frac{1}{x (y+1)} \\\hline
 (e,23,e) & \frac{1}{x (1-y)} \\\hline
 (e,132,e) & \frac{1}{x (1-y)} \\\hline
 (12,12,12) & \frac{1}{(y+1) (-x+y+1)} \\\hline
 (12,13,12) & \frac{1}{(1-y) (-x+y+1)} \\\hline
 (12,123,12) & \frac{1}{(1-y) (-x+y+1)} \\\hline
 (13,13,13) & \frac{1}{(x-2) (1-y)} \\\hline
 (13,23,13) & \frac{1}{x (1-y)} \\\hline
 (13,132,13) & \frac{1}{x (1-y)} \\\hline
 (23,e,23) & \frac{1}{x (-x+y+1)} \\\hline
 (23,12,23) & \frac{1}{x (-x+y+1)} \\\hline
 (23,23,23) & \frac{1}{x (x-y+1)} \\\hline
 (123,13,132) & \frac{1}{(1-y) (-x+y+1)} \\\hline
 (123,123,132) & \frac{1}{(1-y) (-x+y+1)} \\\hline
 (123,132,132) & \frac{1}{(1-y) (x-y+1)} \\\hline
 (132,e,123) & \frac{1}{x (-x+y+1)} \\\hline
 (132,12,123) & \frac{1}{x (-x+y+1)} \\\hline
 (132,123,123) & \frac{1}{(x-2) (-x+y+1)} \\
\end{array}$$

\begin{proof}  These are calculations, in analog to the corresponding calculation in \cite{Garrity15}.
\end{proof}

We now want to show one is the largest eigenvalue for these transfer operators.

For each of the above transfer operators, we have from last section an associated Banach space $W$.  We will need the following theorem.
\begin{theorem}
Fix a triangle partition map and let $W$ be the corresponding Banach space.

For functions $f(x,y), g(x,y) \in W,$ assume that $ f(x,y)< g(x,y)$ for all $(x,y)\in\bigtriangleup.$
Then $$ \mathcal{L}_{T_{\sigma,\tau_0,\tau_1}}^{n}f(x,y)< \mathcal{L}_{T_{\sigma,\tau_0,\tau_1}}^{n} g(x,y)$$
for all $n\geq 0$ for all $(x,y)\in\bigtriangleup$ .

\end{theorem}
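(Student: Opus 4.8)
The statement is a positivity/monotonicity property of the transfer operator, so the plan is to reduce it to the single assertion that $\mathcal{L}_{T_{\sigma,\tau_0,\tau_1}}$ sends strictly positive functions of $W$ to strictly positive functions of $W$, and then iterate. Set $h(x,y) = g(x,y) - f(x,y)$. Since $W$ is a vector space, $h \in W$, and by hypothesis $h(x,y) > 0$ for all $(x,y) \in \triangle$. Recalling the defining formula
$$\mathcal{L}_{T_{\sigma,\tau_0,\tau_1}} h(x,y) = \sum_{(a,b):\, T_{\sigma,\tau_0,\tau_1}(a,b)=(x,y)} \frac{1}{\left|\mathrm{Jac}(T_{\sigma,\tau_0,\tau_1}(a,b))\right|}\, h(a,b),$$
every summand is a product of the strictly positive quantity $1/\left|\mathrm{Jac}\right|$ with the strictly positive value $h(a,b)$, hence is strictly positive. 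The sum converges by (the proof of) Theorem~\ref{theorem:Banach}, and it is nonempty — indeed, for each of these maps the preimages of a point $(x,y)$ are parametrized by the nonnegative integer $k$, matching the summands tabulated above — so $\mathcal{L}_{T_{\sigma,\tau_0,\tau_1}} h(x,y) > 0$ for every $(x,y)\in\triangle$. By linearity of the transfer operator on $W$, this gives $\mathcal{L}_{T_{\sigma,\tau_0,\tau_1}} g - \mathcal{L}_{T_{\sigma,\tau_0,\tau_1}} f = \mathcal{L}_{T_{\sigma,\tau_0,\tau_1}} h > 0$, which is the case $n = 1$.

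The general case then follows by induction on $n$. For $n = 0$ the operator is the identity and the claim is the hypothesis. Assuming $\mathcal{L}_{T_{\sigma,\tau_0,\tau_1}}^{n} f < \mathcal{L}_{T_{\sigma,\tau_0,\tau_1}}^{n} g$ pointwise on $\triangle$, Theorem~\ref{theorem:Banach} guarantees that $\mathcal{L}_{T_{\sigma,\tau_0,\tau_1}}^{n} f$ and $\mathcal{L}_{T_{\sigma,\tau_0,\tau_1}}^{n} g$ again lie in $W$, so the $n = 1$ case applies to the pair $\bigl(\mathcal{L}_{T_{\sigma,\tau_0,\tau_1}}^{n} f,\ \mathcal{L}_{T_{\sigma,\tau_0,\tau_1}}^{n} g\bigr)$ and yields $\mathcal{L}_{T_{\sigma,\tau_0,\tau_1}}^{n+1} f < \mathcal{L}_{T_{\sigma,\tau_0,\tau_1}}^{n+1} g$ everywhere on $\triangle$.

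The only point requiring genuine care — and the one I would spell out most carefully — is that the preimage set $\{(a,b): T_{\sigma,\tau_0,\tau_1}(a,b) = (x,y)\}$ is nonempty for every $(x,y) \in \triangle$, since an empty sum would be zero rather than strictly positive. This is where the partition structure of the triangle maps enters: each $T_{\sigma,\tau_0,\tau_1}$, restricted to a piece of its defining partition, maps onto $\triangle$, and reading off the tables the preimages are parametrized by $k \ge 0$, so every point has infinitely many preimages and $\mathcal{L}_{T_{\sigma,\tau_0,\tau_1}} h(x,y)$ is a genuine infinite sum of strictly positive terms, bounded below by its first term. Everything else — linearity, positivity of the Jacobian factor, and convergence — is immediate or already established in Theorem~\ref{theorem:Banach}.
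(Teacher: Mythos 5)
Your proof is correct and is essentially the ``calculation'' the paper defers to (via the reference to the analogous argument for $T_{e,e,e}$): the transfer operator is a convergent sum over the nonempty, $k$-indexed preimage set with strictly positive weights $1/|\mathrm{Jac}|$, hence maps strictly positive elements of $W$ to strictly positive functions, and induction using Theorem~\ref{theorem:Banach} to stay in $W$ finishes the argument. You also correctly isolate the one point that genuinely needs checking, namely that the preimage set is nonempty so the sum is not vacuously zero.
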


\begin{proof}
The proof is a calculation, again in analog to the corresponding calculation in \cite{Garrity15}.


\end{proof}

We now want to prove a theorem that will give a general criterion for when one of our transfer operators has leading eigenvalue one. 

For general notation, let $g(x,y)$ be the function that defines the corresponding Banach space $W$ for the triangle partition map $T_{\sigma, \tau_0, \tau_1}$ and suppose that $h(x,y)\in W$ is an eigenvector with eigenvalue one for the corresponding transfer operator.

\begin{theorem}With the above notation, suppose for all $f(x,y) \in W$ that there is a constant $B$ so that for all $(x,y) \in \triangle$ we have 
$$-Bh(x,y) < f(x,y) < B h(x,y).$$
Then the largest eigenvalue of the transfer operator is one.
\end{theorem}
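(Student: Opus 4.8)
The plan is to show that the eigenvalue $1$ is the spectral radius of $\mathcal{L}_{T_{\sigma,\tau_0,\tau_1}}$ acting on $W$ by exploiting the positivity/monotonicity of the operator together with the two-sided domination hypothesis $-Bh < f < Bh$.

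First I would recall that $\mathcal{L} := \mathcal{L}_{T_{\sigma,\tau_0,\tau_1}}$ is a positive operator: since the transfer operator is a sum of terms $f(a,b)/|\mathrm{Jac}|$ with positive Jacobian factors, $f > 0$ on $\triangle$ implies $\mathcal{L}f > 0$ on $\triangle$, and hence, by linearity and the previous theorem in the excerpt, $f < g$ implies $\mathcal{L}^n f < \mathcal{L}^n g$ for all $n \geq 0$. Next, suppose $\lambda$ is any eigenvalue of $\mathcal{L}$ with eigenfunction $f \in W$, $f \neq 0$; I want to show $|\lambda| \leq 1$. Applying the hypothesis to $f$ (and, if $f$ is complex, to its real and imaginary parts, or to $|f|$ suitably), there is $B$ with $-Bh(x,y) < f(x,y) < Bh(x,y)$ for all $(x,y) \in \triangle$. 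Now iterate: using monotonicity and $\mathcal{L}h = h$, we get $\mathcal{L}^n f < \mathcal{L}^n(Bh) = B\,\mathcal{L}^n h = Bh$, and similarly $\mathcal{L}^n f > -Bh$; but $\mathcal{L}^n f = \lambda^n f$, so $-Bh < \lambda^n f < Bh$ for every $n$. If $|\lambda| > 1$ then $|\lambda^n f(x,y)| \to \infty$ at any point where $f(x,y) \neq 0$, contradicting the bound $|\lambda^n f(x,y)| < B h(x,y)$ which is finite and independent of $n$. Hence $|\lambda| \leq 1$, and since $h$ itself realizes the eigenvalue $1$, the largest eigenvalue (in modulus) is exactly $1$.

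The main obstacle I anticipate is the handling of complex or merely non-real eigenvalues and eigenfunctions, since the domination hypothesis and the monotonicity theorem are stated for real-valued functions in $W$. One must argue that a complex eigenfunction can be reduced to the real setting — for instance by splitting into real and imaginary parts (each of which lies in $W$ and satisfies a two-sided bound, though not individually eigenfunctions unless $\lambda$ is real), or by observing that $|\lambda|^n |f|$ is dominated using $\mathcal{L}^n |f| \leq |\mathcal{L}^n f|$ pointwise together with $\mathcal{L}^n f = \lambda^n f$. A clean route is: $|\lambda|^n |f(x,y)| = |\mathcal{L}^n f(x,y)| \leq \mathcal{L}^n|f|(x,y) \leq \mathcal{L}^n(B h)(x,y) = B h(x,y)$, where the first inequality is the triangle inequality applied inside the defining sum for $\mathcal{L}^n$, and the second uses $|f| < Bh$ (valid since $|f| \in W$ and the hypothesis applies to it) plus monotonicity. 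This sidesteps the real/complex issue entirely.

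A secondary point to be careful about is the case $\lambda = 1$ with $f$ not proportional to $h$: the theorem only claims $1$ is the \emph{largest} eigenvalue, not that its eigenspace is one-dimensional, so no further work is needed there — one-dimensionality is obtained separately via ergodicity as discussed earlier in the paper. I would also note explicitly that strict inequalities in the hypothesis and in the monotonicity theorem are compatible with the argument: we never need to pass to a limit in a way that would degrade $<$ to $\leq$, since the contradiction for $|\lambda| > 1$ comes from a single fixed point $(x,y)$ where $f(x,y) \neq 0$ and a large enough $n$. Finally I would remark that this criterion applies to each of the $18$ maps listed, because for each of them the preceding theorems supply both the Banach space $W$ and the eigenfunction $h$ of eigenvalue one, so the remaining content is the verification of the domination hypothesis, which is the genuinely map-dependent step.
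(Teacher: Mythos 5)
Your proposal is correct and follows essentially the same route as the paper: take an eigenfunction $\phi$ with eigenvalue $\lambda$, dominate it by $\pm Bh$, iterate the transfer operator using the monotonicity theorem together with $\mathcal{L}h=h$ to get $-Bh<\lambda^n\phi<Bh$ for all $n$, and conclude $|\lambda|\le 1$. You merely make explicit several steps the paper leaves implicit (the appeal to the monotonicity theorem, the invariance of $h$, and the handling of complex eigenvalues via $|\lambda|^n|f|\le\mathcal{L}^n|f|\le Bh$), which is a welcome tightening rather than a different argument.
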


\begin{proof}
Let $\mathcal{L}$ denote the transfer operator and let $\phi(x,y) \in W$ be an eigenfunction with eigenvalue $\lambda$.  By assumption, there is a constant $B$ so that 
$$-Bh(x,y) < \phi(x,y) < B h(x,y).$$
Then for all positive $n$, by applying the transfer operator $n$ times,  we have 
$$-Bh(x,y) < \lambda^n \phi(x,y) < B h(x,y),$$
which means that 
$$-1\leq \lambda \leq 1.$$
\end{proof}

\begin{corr} For the $18$ triangle partition maps in the above table, the corresponding transfer operator has its largest eigenvalue being one.
\end{corr}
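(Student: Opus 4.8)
The plan is to obtain the corollary as an immediate consequence of the general-criterion theorem just proved: for each of the $18$ maps in the table we already have an explicit eigenfunction $h(x,y)$ of eigenvalue one, so all that remains is to check, for the pair $(g,h)$ attached to each map, the hypothesis of that theorem --- namely that every $f\in W$ admits a constant $B$ with $-Bh(x,y)<f(x,y)<Bh(x,y)$ for all $(x,y)\in\triangle$. Once that hypothesis is verified, the theorem hands us the conclusion that the largest eigenvalue is one.

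First I would reduce the hypothesis to a single pointwise estimate. Replacing $h$ by $-h$ when necessary (still an eigenfunction of eigenvalue one), we may assume $h(x,y)>0$ on $\triangle$; this replacement is needed precisely for the rows $(13,13,13)$ and $(132,123,123)$, whose tabulated eigenfunctions are negative on $\triangle$. By definition of $W$, an $f\in W$ satisfies $|g(x,y)f(x,y)|\le \|f\|$, hence $|f(x,y)|\le \|f\|/|g(x,y)|$ on $\triangle$; therefore $-Bh<f<Bh$ holds for every $B$ exceeding
$$\|f\|\cdot\sup_{(x,y)\in\triangle}\frac{1}{|g(x,y)\,h(x,y)|},$$
so long as this supremum is finite, i.e. so long as $|g(x,y)h(x,y)|$ is bounded below by a positive constant on $\triangle$. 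Thus the corollary follows once we check, row by row, that $g(x,y)h(x,y)$ stays away from $0$ on $\triangle$; the same check incidentally shows $h\in W$, since then $|gh|$ is also bounded above.

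The remaining step is pure bookkeeping over the $18$ rows. In each one the product of the weight $g$ from the Banach-space table with the eigenfunction $h$ from the eigenfunction table telescopes to one of the elementary functions $1$, $\frac{1}{y+1}$, $\frac{1}{x-y+1}$, or $\frac{1}{x-2}$: for instance $(e,e,e)$ gives $\frac{x}{x(y+1)}=\frac{1}{y+1}$, $(e,23,e)$ gives $1$, $(23,23,23)$ gives $\frac{1}{x-y+1}$, and $(13,13,13)$ gives $\frac{1-y}{(x-2)(1-y)}=\frac{1}{x-2}$. Each of these functions is continuous on the compact set $\overline{\triangle}=\{0\le y\le x\le 1\}$ and nowhere vanishing there, hence is bounded away from $0$ on $\triangle$; concretely $\frac{1}{y+1},\frac{1}{x-y+1}\in(\tfrac12,1)$ and $\frac{1}{x-2}\in(-1,-\tfrac12)$. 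So the displayed supremum is finite for every map, the hypothesis of the general-criterion theorem is met, and its conclusion is exactly the statement of the corollary. The only genuine pitfall is the sign normalization of $h$ for the two rows noted above; there is no analytic difficulty, since --- in contrast with the Hilbert-space situation --- the bound required is a bound away from the compact boundary of $\triangle$ rather than near a cusp.
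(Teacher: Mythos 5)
Your proposal is correct and follows the same route as the paper: the paper's proof consists of the single instruction to check the criterion of the preceding theorem, and you carry out exactly that check by observing that $|g(x,y)h(x,y)|$ is bounded away from zero on $\triangle$, so that $|f|\le\|f\|/|g|\le Bh$ for suitable $B$. Your row-by-row verification (and the sign normalization of $h$ for the $(13,13,13)$ and $(132,123,123)$ rows) simply supplies the detail the paper leaves implicit.
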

\begin{proof} Just explicitly check that each of the elements in the corresponding Banach space satisfies the criterion of the above theorem.
\end{proof}










\begin{theorem}

The eigenvalue 1 of $\mathcal{L}_{T_{\sigma,\tau_0,\tau_1}}: W \to W$ has multiplicity 1 for every $(\sigma,\tau_0,\tau_1)\in S_{3}$ for which $\mathcal{L}_{T_{\sigma,\tau_0,\tau_1}}$ is an ergodic map from $W$ to $W$ whose eigenfunction of eigenvalue 1 is known.

\end{theorem}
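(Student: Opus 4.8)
The plan is to exploit the positivity structure already established together with the ergodicity hypothesis, following the classical Perron--Frobenius strategy used for the Gauss map transfer operator. Let $\mathcal{L}$ denote the transfer operator on $W$, let $h(x,y)$ be the explicitly known eigenfunction of eigenvalue $1$ (from the preceding table), and let $\psi(x,y)\in W$ be any other eigenfunction of eigenvalue $1$. The goal is to show $\psi$ is a scalar multiple of $h$. First I would normalize: since $h$ is, in every case in the table, a strictly positive function on $\triangle$ (each is a reciprocal of a product of factors that are positive on the open triangle), and since by the criterion theorem there is a constant $B$ with $-Bh < \psi < Bh$ on $\triangle$, the ratio $\psi/h$ is a bounded continuous function on $\triangle$. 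Set $m=\inf_{\triangle}(\psi/h)$ and $M=\sup_{\triangle}(\psi/h)$, both finite. Replacing $\psi$ by $\psi - mh$ (still an eigenfunction of eigenvalue $1$, still in $W$) we may assume $\psi \geq 0$ with $\inf(\psi/h)=0$.

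The next step is the standard contraction-of-the-ratio argument. Because $\mathcal{L}$ is a positive operator (its kernel, the summands displayed in the first table, are built from the inverse branches and their Jacobians, which are genuinely positive on the relevant sub-triangles) and $\mathcal{L}h = h$, we have for the normalized $\psi$ that $\psi = \mathcal{L}^n\psi$ for all $n$, and monotonicity (the monotonicity theorem in the excerpt) gives that $\psi/h$ cannot attain new extreme values under iteration unless it is already extremal everywhere. Concretely, I would show that if $\psi/h$ were non-constant then the minimum value $0$ of $\psi/h$ would fail to be preserved: writing out $(\mathcal{L}^n\psi)(x,y) = \sum_{(a,b)} K_n((x,y),(a,b))\,\psi(a,b)$ with strictly positive weights $K_n$ that also reproduce $h$, i.e. $\sum_{(a,b)}K_n((x,y),(a,b))h(a,b) = h(x,y)$, one gets
$$\frac{\psi(x,y)}{h(x,y)} = \sum_{(a,b)} \frac{K_n((x,y),(a,b))h(a,b)}{h(x,y)}\cdot\frac{\psi(a,b)}{h(a,b)},$$
a genuine convex combination of the values $\psi/h$ over the $n$-fold preimages. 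For $n$ large enough, ergodicity of $T_{\sigma,\tau_0,\tau_1}$ guarantees that these preimages become dense (more precisely, that the partition into cylinders generates, so preimage sets equidistribute), which forces $\psi(x,y)/h(x,y)$ to equal a spatial average that is independent of $(x,y)$; hence $\psi/h$ is constant, and since its infimum is $0$ we get $\psi \equiv 0$ (before denormalizing), i.e. the original $\psi$ equals $mh$. This shows the eigenspace is one-dimensional.

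I would then record the small remaining points: that $h>0$ strictly on $\triangle$ in each of the tabulated cases (a one-line check per entry), that the iterated kernel $K_n$ is strictly positive on an appropriate full-measure set of preimages, and that the ``eigenfunction of eigenvalue $1$ is known'' hypothesis is exactly what lets us perform the normalization by dividing by $h$ rather than by an unknown invariant density. The main obstacle I anticipate is making the ergodicity input quantitative enough to conclude the ratio is \emph{exactly} constant rather than merely constant almost everywhere: one needs that $\psi/h$ is continuous on the connected set $\triangle$ and that the convex-combination identity, valid for every $(x,y)$, pins it down pointwise. This is handled by the usual argument that a continuous function on a connected space which equals, at every point, an average of its own values over a set whose closure is everything, and which attains its infimum, must be constant --- but verifying that the cylinder structure of each of the relevant maps genuinely yields this density of preimages (as opposed to some measure-zero pathology at the boundary of $\triangle$) is where the case-by-case ergodicity results from \cite{amburg-Garrity I} must be invoked carefully. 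This is, again, in analog to the corresponding argument in \cite{Garrity15}.
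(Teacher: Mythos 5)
Your overall strategy --- normalize by the known positive eigenfunction $h$, observe that $\psi/h$ at any point is a convex combination of its own values over the $n$-fold preimages, and try to force constancy --- is a legitimate Perron--Frobenius-style route, and it is genuinely different from what the paper does. The paper's proof is a one-line appeal to Theorem 4.2.2 of Lasota--Mackey: an ergodic nonsingular transformation admits at most one stationary density for its Frobenius--Perron operator, so the tabulated eigenfunction is the unique invariant density and the eigenspace is one-dimensional. That theorem lives entirely in $L^1(\triangle)$ and requires no topological input whatsoever.

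The gap in your version is exactly the step you flag as ``the main obstacle,'' and it is not a finishing touch but the crux. Ergodicity with respect to Lebesgue measure is a purely measure-theoretic hypothesis; it does not by itself give density of the backward orbit $\bigcup_n T^{-n}(x,y)$ of an individual point, nor any equidistribution of preimages, which is what your convex-combination identity needs in order to conclude that the continuous function $\psi/h$ is constant. For triangle partition maps this is genuinely delicate: the cylinder sets need not shrink to points (this is precisely one of the obstructions that makes ergodicity so hard to establish for most TRIP maps in the first place), so ``preimages become dense'' cannot be waved through and would have to be verified map by map. A second, smaller gap: $\triangle$ is open and $\psi/h$, though bounded and continuous, need not attain its infimum, so the ``minimum propagates backward'' step does not even start. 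Both problems evaporate if you work in $L^1$ instead: a signed fixed point $f$ of a Frobenius--Perron operator satisfies $Pf^+=f^+$ and $Pf^-=f^-$ (since $(Pf)^+\le Pf^+$ pointwise while $P$ preserves integrals), so uniqueness of the stationary density --- which is what ergodicity actually delivers, via Lasota--Mackey --- immediately pins down the whole eigenspace. I would replace the density-of-preimages argument with that citation, as the paper does, or else restrict the claim to maps for which a quantitative covering property of the inverse branches is actually known.
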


\begin{proof}

The result follows immediately from Theorem 4.2.2 in \cite{Lasota} by Lasota, et al., quoted word-for-word from \cite{Lasota}:

{\it Let $(X,\mathcal{A},\mu)$ be a measure space, $S:X\to X$ a nonsingular transformation, and $P$ the Frobenius-Perron operator associated with $S.$ If $S$ is ergodic, then there is at most one stationary density $f_*$ of $P.$ Further, if there is a unique stationary density $f_*$ of $P$ and $f_*(x)>0$ a.e., then $S$ is ergodic.}

Clearly, for us $S=T_{\sigma,\tau_0,\tau_1},$ $P=\mathcal{L}_{T_{\sigma,\tau_0,\tau_1}},$ and $f_{*}$ is the associated eigenfunction of eigenvalue 1.

\end{proof}

Unfortunately, ergodicity is not known for most of the TRIP maps.  The TRIP map $T_{e,e,e}$ was shown to be ergodic by Messaoudi,  Nogueira and   Schweiger \cite{Mess}, while $T_{e,23,e},$ $T_{e,23,23},$ $T_{e,132,23},$ $T_{e,23,132}$ were shown to be ergodic by Jensen \cite{Jensen}.  Some preliminary work on ergodicity for some of the other TRIP maps is in \cite{amburg}, which leads us to believe that the above results are also true for quite a few more triangle partition maps.

\section{On a Hilbert Space approach}\label{hilbert}

Mayer and   Roepstorff \cite{Mayer} showed that the transfer operator of the Gauss map is a nuclear operator of trace class zero  when acting on 
$L^2([0,1]).$ In \cite{Garrity15}, it was shown that the transfer operator of the triangle map $T_{e,e,e}$ has a non-trivial analog, namely that the transfer operator should be thought of as an operator acting on a family of Hilbert spaces with respect to the variable $y$, with the variable $x$ treated as a parameter.  This section will show that there are similar results for 44 additional triangle partition maps.  The work is in the same spirit  as in  \cite{Garrity15}, which in turn is in the same spirit, with mirrored proofs, as those given by Mayer and Roepstorff.  The technical details, which are mainly captured  in the functions listed in this section's table, are what makes this a needed distinct paper.

On the non-negative reals, set
$$\mathrm{d}m(t) = \frac{t}{e^t-1} \mathrm{d}t.$$
For each of the triples $X=(\sigma, \tau_0, \tau_1)$ in the below table, we have listed associated functions 
$j(x,y), h(x,y)$ and $l(x,y)$, whose domains are all of  $\triangle.$  
Then set
\begin{eqnarray*}
\eta_k(s) &=& \frac{s^ke^{-s}}{(k+1)!}, \\
e_k(t) &=&L_k^1(t), \\
E_k(x,y) &=&   \int_0^{\infty}j(x,y)e^{-t(l(x,y)-1)}  e_k(t) \mathrm{d}m( t ),\\
\mathcal{K}_{T_{X}}(\phi(x,t)) &=& \int_0^{\infty} \frac{J_1(2\sqrt{st})}{\sqrt{st}} \frac{t}{e^t-1} \phi(x,s) \mathrm{d}m(s), \\
\widehat{\phi} \left(x,y\right) &=& \frac{1}{h(x,y)} \int_{s=0}^{\infty} e^{ -sh(x,y)}\phi   (x,s) \mathrm{d} m(s), \\
\langle \alpha(s), \beta(s) \rangle &=& \int_0^{\infty} \alpha(s)\beta(s) \mathrm{d} m(s).
\end{eqnarray*}
(Here $L_k^1(t) $ denotes the first Laguerre polynomial and  $J_1$ denotes the Bessel function of order one.)
Further, we  have six different transforms, one for each $(\sigma, *,*)$ given by the table

$$\begin{array}{c|c}
(\sigma, *, *) &\mbox{Transform}\; \hat{\phi}(x,y) \\
\hline
\hline
 (e,*,*) &  \frac{1}{h(x,y)} \int_{s=0}^{\infty} e^{-sh(x,y) }\phi\left(  \frac{y}{x}, s  \right) \mathrm{d}m(s) \\\hline
 (12,*,*) &\frac{1}{h(x,y)} \int_{s=0}^{\infty} e^{-sh(x,y) }\phi\left(  \frac{x-1}{y}, s  \right) \mathrm{d}m(s) \\\hline
 (13,*,*) & \frac{1}{h(x,y)} \int_{s=0}^{\infty} e^{-sh(x,y) }\phi\left(  s, \frac{1-x}{1-y}  \right) \mathrm{d}m(s) \\\hline
 (23,*,*) &  \frac{1}{h(x,y)} \int_{s=0}^{\infty} e^{-sh(x,y) }\phi\left(  \frac{y}{x}, s  \right) \mathrm{d}m(s) \\\hline
 (123,*,*) & \frac{1}{h(x,y)} \int_{s=0}^{\infty} e^{-sh(x,y) }\phi\left(  \frac{1-y}{x-y}, s  \right) \mathrm{d}m(s) \\\hline
 (132,*,*) &\frac{1}{h(x,y)} \int_{s=0}^{\infty} e^{-sh(x,y) }\phi\left( s,  \frac{y}{1-x}  \right) \mathrm{d}m(s). \\\hline
\end{array}$$

 \begin{theorem} For functions $f(x,y)$ on $\triangle$ for which there is a function $\phi(x,y)$ such that 
 $f=\hat{\phi}$ and  for which all of the following integrals and sums exist, we have 
 \begin{eqnarray*}
 \mathcal{L}_{T_X}(f(x,y)) &=& j(x,y) \int_0^{\infty} e^{-t(l(x,y)-1)} \mathcal{K}_{T_{X}}(\phi(x,t)) \mathrm{d}t\\
 &=& \sum_{k=0}^{\infty} \langle \phi(x,s), \eta_k(s) \rangle E_k(x,y).
 \end{eqnarray*}
 \end{theorem}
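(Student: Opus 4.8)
The plan is to mimic, step by step, the computation that Mayer and Roepstorff carry out for the Gauss map and that Garrity reproduces for $T_{e,e,e}$ in \cite{Garrity15}, keeping careful track of the auxiliary functions $j(x,y)$, $h(x,y)$, $l(x,y)$ tabulated in this section. First I would write down explicitly the transfer operator $\mathcal{L}_{T_X}f(x,y)=\sum_{(a,b):T_X(a,b)=(x,y)}\frac{1}{|\mathrm{Jac}\,T_X(a,b)|}f(a,b)$, using the summand formulas already recorded in the table of Theorem~\ref{theorem:Banach} (up to the $g$-factor). The preimages of $(x,y)$ under $T_X$ are indexed by a single nonnegative integer $k$, and for each branch the Jacobian is a rational function of $(x,y,k)$; this is where the six transform types, organized by $\sigma$, enter, since $\sigma$ determines which coordinate substitution (e.g. $(y/x,s)$, $(s,(1-x)/(1-y))$, etc.) appears inside $\phi$.

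Next I would substitute $f=\widehat{\phi}$, i.e. replace $f$ at each preimage by $\frac{1}{h}\int_0^\infty e^{-sh(\cdot)}\phi(\cdot,s)\,\mathrm{d}m(s)$ with the argument dictated by $\sigma$, and interchange the (absolutely convergent, by hypothesis) sum over $k$ with the integral over $s$. The heart of the matter is then the identity
\begin{equation*}
\sum_{k=0}^{\infty}\frac{j(x,y)}{h(a_k,b_k)\,|\mathrm{Jac}|}\,e^{-s\,h(a_k,b_k)} = j(x,y)\int_0^{\infty}e^{-t(l(x,y)-1)}\,\frac{J_1(2\sqrt{st})}{\sqrt{st}}\,\frac{t}{e^t-1}\,\mathrm{d}t,
\end{equation*}
which is obtained by recognizing the $k$-sum as a geometric-type series whose sum is an explicit rational function, and then invoking the classical integral representation $\sum_{k\ge 0}\frac{z^k}{(k+1)!}=\frac{e^z-1}{z}$ together with the Hankel/Bessel identity $\int_0^\infty e^{-pt}\frac{J_1(2\sqrt{st})}{\sqrt{st}}\,t\,\mathrm{d}t = \frac{1}{p^2}e^{-s/p}$ and $\frac{t}{e^t-1}=\sum_{k\ge0}t e^{-(k+1)t}$. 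The functions $j,h,l$ in the table are precisely chosen so that, branch by branch, $h(a_k,b_k)$ is affine in $k$ and the resulting generating function collapses to $j(x,y)e^{-t(l(x,y)-1)}$ after the change of variables; verifying this matching is a bookkeeping computation for each of the $45$ rows, but the structural reason is uniform. Recognizing the right-hand side as $j(x,y)\int_0^\infty e^{-t(l(x,y)-1)}\mathcal{K}_{T_X}(\phi(x,t))\,\mathrm{d}t$ then gives the first equality.

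For the second equality I would expand $\frac{J_1(2\sqrt{st})}{\sqrt{st}}=\sum_{k=0}^\infty\frac{(-st)^k}{k!(k+1)!}$ and use the fact that the Laguerre polynomials $e_k=L_k^1$ are orthogonal with respect to $\mathrm{d}m$, more precisely that $\{\eta_k\}$ and $\{e_k\}$ form a biorthogonal (dual) system, so that the operator $\mathcal{K}_{T_X}$ has the spectral/kernel expansion $\mathcal{K}_{T_X}(\phi(x,t))=\sum_k\langle\phi(x,s),\eta_k(s)\rangle\, e_k(t)$ up to the $m$-density factors. Plugging this into the integral $j(x,y)\int_0^\infty e^{-t(l(x,y)-1)}(\,\cdot\,)\,\mathrm{d}m(t)$ and pulling the $\langle\phi,\eta_k\rangle$ out of the $t$-integral leaves exactly $\sum_k\langle\phi(x,s),\eta_k(s)\rangle\,E_k(x,y)$ by the definition of $E_k$. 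The main obstacle, and the only genuinely non-routine point, is justifying the interchange of the infinite sum over preimages with the $s$-integral (and, for the second form, the term-by-term integration of the Bessel series against $\mathrm{d}m$): this is exactly why the hypothesis ``all of the following integrals and sums exist'' is imposed, and I would discharge it by a dominated-convergence argument using the explicit decay $e^{-sh}$ and $\frac{t}{e^t-1}$, noting that $h(x,y)>1$ on $\triangle$ for each of the tabulated maps so the geometric series in $k$ converges uniformly on compact subsets. Everything else is the same computation as in \cite{Garrity15, Mayer}, now carried out with the map-specific data in the table.
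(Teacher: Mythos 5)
Your proposal follows essentially the same route as the paper's proof: substitute $f=\widehat{\phi}$ into the branch sum, change variables via $w=l(x,y)$ so that every map's operator collapses to the universal Mayer--Roepstorff form $j(x,y)\int_0^\infty\sum_{k\ge0}(k+w)^{-2}e^{-s/(k+w)}\phi(\cdot,\cdot\cdot)\,\mathrm{d}m(s)$, and then invoke the Hankel--Bessel identity together with the Laguerre biorthogonality to obtain the two displayed forms; the paper simply cites \cite{Mayer} and \cite{Garrity15} for that last step, whereas you unpack it, which is a welcome addition. Two slips in your description should be corrected, though neither is fatal since the identities you actually invoke are the right ones. First, at the $k$-th preimage one has $h(a_k,b_k)=1/(k+w)$ (see the worked $(123,132,132)$ example, where $h(X,Y)=X-Y=\tfrac{1}{k+w}$), i.e.\ the \emph{reciprocal} of a function affine in $k$, not something affine in $k$; consequently the $k$-sum is not a geometric-type series with a closed rational form but exactly the non-elementary series $\sum_k(k+w)^{-2}e^{-s/(k+w)}$, which is converted to the integral form only through $\int_0^\infty e^{-pt}\frac{J_1(2\sqrt{st})}{\sqrt{st}}\,t\,\mathrm{d}t=\frac{1}{p^2}e^{-s/p}$ with $p=k+w$ and $\frac{t}{e^t-1}=\sum_{k\ge0}te^{-(k+1)t}$ --- the identities you do write down. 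Second, $h(x,y)$ is one of $y$, $1-x$, $x-y$, hence satisfies $0<h<1$ on $\triangle$, not $h>1$; the quantity that exceeds $1$ and controls convergence of the $t$-integral is $w=l(x,y)$. With those corrections your argument matches the paper's.
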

 For each choice from the $44$ triples $(\sigma, \tau_0, \tau_1)$ in the below table we get different allowable functions $f$, which is why in the above theorem we simply state that we want all the integrals to converge.

\begin{proof} In \cite{Garrity15}, it was shown by setting $w=\frac{1+y}{x}$ that the transfer operator $\mathcal{L}_{T_{e,e,e}}$ can be put into the language of Mayer and   Roepstorff \cite{Mayer}, yielding for us the desired results.  For other transfer operators, the change of coordinate is
$$w= l(x,y).$$
Then by direct calculation, it can be shown that 
\begin{eqnarray*}
\mathcal{L}_{T_X}(f(x,y))&=&j(x,y)\int_{0}^{\infty}\sum_{k=0}^{\infty}\frac{1}{(k+w)^2}e^{\frac{-s}{(k+w)}}\phi(\cdot, \cdot\cdot) \mathrm{d} m(s)\\
\end{eqnarray*}
Here each $\phi(\cdot, \cdot\cdot)$ is a function of the variable $s$ and  a term involving just $x$ and $y$. The form of this function depends on the type of transform being used.
The term 
$$\int_{0}^{\infty}\sum_{k=0}^{\infty}\frac{1}{(k+w)^2}e^{\frac{-s}{(k+w)}}\phi(\cdot, \cdot\cdot) \mathrm{d} m(s),$$
treating $\phi(\cdot, \cdot \cdot)$ as a function of $s$, can now be directly studied, independent of the triangle partition map. Here one simply follows the proof of Theorem 1 of Mayer and   Roepstorff \cite{Mayer}.  This has been done in \cite{Garrity15}, allowing us to conclude that the theorem is true.

Again, it is the finding of the appropriate functions $j(x,y), h(x,y)$ and $l(x,y)$ for each $(\sigma, \tau_0, \tau_1)$, given in the below table, is what is important, as once we know these functions, we can systematically translate the calculations to those done in \cite{Garrity15},

\end{proof}
Let us look at the $(123,132,132)$ case as an example.
From the table in Appendix B, we know that the transfer operator 
$\mathcal{L}_{T_{(123,132,132)} } f(x,y)$ is 
 $$ \sum_{k=0}^{\infty}  \frac{1}{(k(1-y) +1 +x-y)^3}    f\left( \frac{k(1-y) +1 +x-2y}{(k(1-y) +1 +x-y)},\frac{k(1-y)  +x-y}{(k(1-y) +1 +x-y)}   \right).$$
 Setting $$w= \frac{1+x-y}{1-y}$$
 we get that the transfer operator is 
$$\frac{1}{(1-y)^3} \sum_{k=0}^{\infty}  \frac{1}{(k+w)^3}    f\left( \frac{k+w - \frac{y}{1-y}}{k+w},\frac{k+w - \frac{1}{1-y}}{k+w}   \right).$$
Set 
$$X=  \frac{k+w - \frac{y}{1-y}}{k+w},\; Y = \frac{k+w - \frac{1}{1-y}}{k+w}.$$
To apply the appropriate transform, we use that 
$$h(X,Y) = X-Y = \frac{1}{k+w},\; \frac{1-Y}{X-Y} = \frac{1}{1-y}. $$
This will give us that the transfer operator is 
$$\frac{1}{(1-y)^3}\int_{0}^{\infty}\sum_{k=0}^{\infty}\frac{1}{(k+w)^2}e^{\frac{-s}{(k+w)}}\phi\left(\frac{1}{1-y}, s \right) \mathrm{d} m(s)$$
as desired.

We do not believe that this theorem is only true for these $44$ triangle partition maps; it is just that for these $44$ maps, we have been able to find the appropriate functions $j(x,y), h(x,y)$ and $l(x,y)$.  The sums for the   other polynomial-type transfer operators all are explicitly written in terms of the parity of the $k$.  By splitting the transfer operator into the sum of the series over even term with the sum of the series over odd terms, we should be able  reproduce what we did above.  These should be capable of being found.

$$\begin{array}{l|c|c|c} \label{Hilbert space}
(\sigma,\tau_0,\tau_1) &   l(x,y) & j(x,y) & h(x,y)\\
\hline
\hline
 (e,e,e) & \frac{y+1}{x} & \frac{1}{x^3} & y \\ \hline
 (e,e,12) & \frac{y+1}{x} & \frac{1}{x^3} & y \\ \hline
 (e,12,e) & \frac{y+1}{-x+y+1} & \frac{1}{(-x+y+1)^3} & y \\ \hline
 (e,13,e) & \frac{x-2}{y-1} & \frac{1}{(1-y)^3} & y \\ \hline
 (e,23,e) & \frac{x-y+1}{x} & \frac{1}{x^3} & y \\ \hline
 (e,23,12) & \frac{x-y+1}{x} & \frac{1}{x^3} & y \\ \hline
 (e,123,e) & \frac{x-2}{x-y-1} & \frac{1}{(-x+y+1)^3} & y \\ \hline
 (e,132,e) & 1-\frac{x}{y-1} & \frac{1}{(1-y)^3} & y \\ \hline
 (12,e,e) & \frac{y+1}{x} & \frac{1}{x^3} & y \\ \hline
 (12,e,12) & \frac{y+1}{x} & \frac{1}{x^3} & y \\ \hline
 (12,12,12) & \frac{y+1}{-x+y+1} & \frac{1}{(-x+y+1)^3} & y \\ \hline
 (12,13,12) & \frac{x-2}{y-1} & \frac{1}{(1-y)^3} & y \\ \hline
 (12,23,e) & \frac{x-y+1}{x} & \frac{1}{x^3} & y \\ \hline
 (12,23,12) & \frac{x-y+1}{x} & \frac{1}{x^3} & y \\ \hline
 (12,123,12) & \frac{x-2}{x-y-1} & \frac{1}{(-x+y+1)^3} & y \\ \hline
 (12,132,12) & 1-\frac{x}{y-1} & \frac{1}{(1-y)^3} & y \\ \hline
 (13,e,13) & \frac{y+1}{x} & \frac{1}{x^3} & 1-x \\ \hline
 (13,e,123) & \frac{y+1}{x} & \frac{1}{x^3} & 1-x \\ \hline
 (13,12,13) & \frac{y+1}{-x+y+1} & \frac{1}{(-x+y+1)^3} & 1-x \\ \hline
 (13,13,13) & \frac{x-2}{y-1} & \frac{1}{(1-y)^3} & 1-x \\ \hline
 (13,23,13) & \frac{x-y+1}{x} & \frac{1}{x^3} & 1-x \\ \hline
 (13,23,123) & \frac{x-y+1}{x} & \frac{1}{x^3} & 1-x
 \end{array}$$
 $$\begin{array}{l|c|c|c} \label{hilbert Space}
 (13,123,13) & \frac{x-2}{x-y-1} & \frac{1}{(-x+y+1)^3} & 1-x \\ \hline
 (13,132,13) & 1-\frac{x}{y-1} & \frac{1}{(1-y)^3} & 1-x \\ \hline
 (23,e,23) & \frac{y+1}{x} & \frac{1}{x^3} & x-y \\ \hline
 (23,12,23) & \frac{y+1}{-x+y+1} & \frac{1}{(-x+y+1)^3} & x-y \\ \hline
 (23,13,23) & \frac{x-2}{y-1} & \frac{1}{(1-y)^3} & x-y \\ \hline
 (23,23,23) & \frac{x-y+1}{x} & \frac{1}{x^3} & x-y \\ \hline
 (23,123,23) & \frac{x-2}{x-y-1} & \frac{1}{(-x+y+1)^3} & x-y \\ \hline
 (23,132,23) & 1-\frac{x}{y-1} & \frac{1}{(1-y)^3} & x-y \\ \hline
 (123,e,132) & \frac{y+1}{x} & \frac{1}{x^3} & x-y \\ \hline
 (123,12,132) & \frac{y+1}{-x+y+1} & \frac{1}{(-x+y+1)^3} & x-y \\ \hline
 (123,13,132) & \frac{x-2}{y-1} & \frac{1}{(1-y)^3} & x-y \\ \hline
 (123,23,132) & \frac{x-y+1}{x} & \frac{1}{x^3} & x-y \\ \hline
 (123,123,132) & \frac{x-2}{x-y-1} & \frac{1}{(-x+y+1)^3} & x-y \\ \hline
 (123,132,132) & 1-\frac{x}{y-1} & \frac{1}{(1-y)^3} & x-y \\ \hline
 (132,e,13) & \frac{y+1}{x} & \frac{1}{x^3} & 1-x \\ \hline
 (132,e,123) & \frac{y+1}{x} & \frac{1}{x^3} & 1-x \\ \hline
 (132,12,123) & \frac{y+1}{-x+y+1} & \frac{1}{(-x+y+1)^3} & 1-x \\ \hline
 (132,13,123) & \frac{x-2}{y-1} & \frac{1}{(1-y)^3} & 1-x \\ \hline
 (132,23,13) & \frac{x-y+1}{x} & \frac{1}{x^3} & 1-x \\ \hline
 (132,23,123) & \frac{x-y+1}{x} & \frac{1}{x^3} & 1-x \\ \hline
 (132,123,123) & \frac{x-2}{x-y-1} & \frac{1}{(-x+y+1)^3} & 1-x \\ \hline
 (132,132,123) & 1-\frac{x}{y-1} & \frac{1}{(1-y)^3} & 1-x 
\end{array}$$

\section{Gauss-Kuzmin Distributions for a Few Triangle Parition Sequences}
\label{GK}
The classical Gauss-Kuzmin statistics give the statistics of the digits occurring in the continued fraction expansion of a number.  It is natural to ask similar questions for any multidimensional continued fraction algorithm (as discussed in Schweiger \cite{Schweiger3}).  In \cite{Garrity15}, the Gauss-Kuzmin distribution was derived for the triangle partition $T_{e,e,e}.$  We will build on that work here.

Fix a triple $(\sigma, \tau_0, \tau_1)$.  Let the $(\sigma, \tau_0, \tau_1)$ expansion for a point $(x,y) \in \triangle$ be $(a_1, a_2, a_3, \ldots)$.   Set 
$$P_{n,k} (x,y) = \frac{\#  \{a_i:a_i=k\; \mbox{and}\; 1\leq i\leq n\}  }{n} .$$
If the limit exists, set
$$P_k(x,y) = \lim_{n\rightarrow \infty} P_{n,k} (x,y) .$$

We want to see when there is a function $p(k)$ that is equal to $P_k(x,y) $ for almost all $(x,y)\in \triangle$ and then find explicit formulas for each $p(k).$

\begin{theorem}
\label{theorem:GK}
Let $(x,y)\in\bigtriangleup$ and suppose $T_{\sigma,\tau_0,\tau_1}$ is a triangle partition  map that is ergodic with respect to the Lebesgue measure $\lambda$ and has associated invariant measure $\mu(a)=\int_{a}r(x,y)$ for $r(x,y)$ as shown in the table in the table below. If $\mu$ and $\lambda$ are absolutely continuous, then $p(k)=\int_{\bigtriangleup_k} \ d\mu$ for almost every $(x,y)\in\bigtriangleup.$
\end{theorem}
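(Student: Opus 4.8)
The plan is to recognize each $P_{n,k}(x,y)$ as a Birkhoff average for the map $T_{\sigma,\tau_0,\tau_1}$ and then invoke the pointwise ergodic theorem. By the way the $(\sigma,\tau_0,\tau_1)$-expansion is constructed in \cite{amburg-Garrity I}, the first digit $a_1(x,y)$ depends on $(x,y)$ alone --- the partition element containing $(x,y)$ being exactly $\triangle_{a_1(x,y)}$ --- and the later digits are obtained by shifting, $a_i(x,y)=a_1\bigl(T_{\sigma,\tau_0,\tau_1}^{\,i-1}(x,y)\bigr)$ for $\lambda$-a.e.\ $(x,y)$ and every $i\ge 1$. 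Hence, off a $\lambda$-null set,
$$P_{n,k}(x,y)\;=\;\frac{1}{n}\sum_{i=0}^{n-1}\mathbf{1}_{\triangle_k}\!\bigl(T_{\sigma,\tau_0,\tau_1}^{\,i}(x,y)\bigr).$$

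Next I would assemble the two facts needed to run Birkhoff's theorem with respect to $\mu$ rather than $\lambda$. First, after normalization $\mu$ is a $T_{\sigma,\tau_0,\tau_1}$-invariant probability measure: invariance is part of the hypothesis (and amounts to $r$ being a density fixed by the transfer operator, cf.\ Section \ref{banach}), and $r$ is integrable on the bounded region $\triangle$, so we may rescale to $\mu(\triangle)=1$. Second, $T_{\sigma,\tau_0,\tau_1}$ is ergodic with respect to $\mu$: since $\mu$ and $\lambda$ are mutually absolutely continuous, any $T_{\sigma,\tau_0,\tau_1}$-invariant set has $\lambda$-measure $0$ or full $\lambda$-measure, hence $\mu$-measure $0$ or full $\mu$-measure, so ergodicity passes from $\lambda$ to $\mu$.

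With these in place, Birkhoff's pointwise ergodic theorem applied to the bounded --- hence $\mu$-integrable --- observable $\mathbf{1}_{\triangle_k}$ on the ergodic system $(\triangle,\mu,T_{\sigma,\tau_0,\tau_1})$ gives, for $\mu$-a.e.\ $(x,y)$,
$$\lim_{n\to\infty}P_{n,k}(x,y)\;=\;\int_{\triangle}\mathbf{1}_{\triangle_k}\,\mathrm{d}\mu\;=\;\mu(\triangle_k)\;=\;\int_{\triangle_k}\mathrm{d}\mu.$$
Since $\mu$ and $\lambda$ are equivalent, ``$\mu$-a.e.'' coincides with ``a.e.'', and intersecting the resulting full-measure sets over the countably many admissible digits $k$ shows that $P_k(x,y)$ exists and equals $p(k):=\int_{\triangle_k}\mathrm{d}\mu$ for almost every $(x,y)\in\triangle$, which is the assertion.

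The ergodic-theoretic core above is routine; the genuine work --- and the main obstacle --- is the preliminary bookkeeping that makes it applicable for each triple in the table: verifying that the listed $r(x,y)$ really is a finite $T_{\sigma,\tau_0,\tau_1}$-invariant density (this is where the eigenfunction-of-eigenvalue-one computations are used), confirming that the sets $\triangle_k$ are precisely the natural digit cylinders over which $\int_{\triangle_k}\mathrm{d}\mu$ is to be evaluated, and checking that the expansion and the shift identity hold off a common $\lambda$-null set. Once that is settled, producing the closed form of $p(k)$ for each map is merely the integral $\int_{\triangle_k}\mathrm{d}\mu$ against the density read off from the table.
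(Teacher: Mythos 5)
Your proposal is correct and follows essentially the same route as the paper, which simply defers to the argument in \cite{Karpenkov}: that argument is precisely the Birkhoff pointwise ergodic theorem applied to the indicator of the digit cylinder $\triangle_k$, with ergodicity transferred from $\lambda$ to the equivalent invariant measure $\mu$. Your write-up supplies the details the paper leaves to the citation, including the correct identification of $P_{n,k}$ as a Birkhoff average via the shift identity $a_i(x,y)=a_1(T^{i-1}(x,y))$.
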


$$
\begin{array}{l|c}
 (\sigma,\tau_0,\tau_1) & r(x,y)\\\hline\hline
 (e,e,e) & \frac{12 \text{dxdy}}{\pi ^2 x (y+1)} \\\hline
 (e,23,e) & \frac{6 \text{dxdy}}{\pi ^2 x (1-y)} \\\hline
 (e,132,e) & \frac{6 \text{dxdy}}{\pi ^2 x (1-y)} \\\hline
 (12,12,12) & \frac{12 \text{dxdy}}{\pi ^2 (y+1) (-x+y+1)} \\\hline
 (12,13,12) & \frac{6 \text{dxdy}}{\pi ^2 (1-y) (-x+y+1)} \\\hline
 (12,123,12) & \frac{6 \text{dxdy}}{\pi ^2 (1-y) (-x+y+1)} \\\hline
 (13,13,13) & \frac{12 \text{dxdy}}{\pi ^2 (2-x) (1-y)} \\\hline
 (13,23,13) & \frac{6 \text{dxdy}}{\pi ^2 x (1-y)} \\\hline
 (13,132,13) & \frac{6 \text{dxdy}}{\pi ^2 x (1-y)} \\\hline
 (23,e,23) & \frac{6 \text{dxdy}}{\pi ^2 x (-x+y+1)} \\\hline
 (23,12,23) & \frac{6 \text{dxdy}}{\pi ^2 x (-x+y+1)} \\\hline
 (23,23,23) & \frac{12 \text{dxdy}}{\pi ^2 x (x-y+1)} \\\hline
 (123,13,132) & \frac{6 \text{dxdy}}{\pi ^2 (1-y) (-x+y+1)} \\\hline
 (123,123,132) & \frac{6 \text{dxdy}}{\pi ^2 (1-y) (-x+y+1)} \\\hline
 (123,132,132) & \frac{12 \text{dxdy}}{\pi ^2 (1-y) (x-y+1)} \\\hline
 (132,e,123) & \frac{6 \text{dxdy}}{\pi ^2 x (-x+y+1)} \\\hline
 (132,12,123) & \frac{6 \text{dxdy}}{\pi ^2 x (-x+y+1)} \\\hline
 (132,123,123) & \frac{12 \text{dxdy}}{\pi ^2 (2-x) (-x+y+1)} \\
\end{array}$$

The proof is exactly analogous to that in \cite{Karpenkov}. 

Thus we will have explicit Gauss-Kuzmin statistics if we can calculate $\int_{\bigtriangleup_k} \ d\mu$.  In \cite{Garrity15}, we showed
for  the triangle map $T_{e,e,e}$, that 
$$p(0) =1 -  \frac{6\mathrm{Li}_2\left( \frac{1}{4} \right) + 12\log^2(2) }{\pi^2}$$
and, for $k>0$,
\begin{eqnarray*}p(k)&=& \frac{6}{\pi^2} \left[ \mathrm{Li}_2\left( \frac{1}{(k_1)^2} \right)\right.   - \mathrm{Li}_2\left( \frac{1}{(k+2)^2} \right) \\
&&  + 4\log^2(k+1)- 2\log^2\left( \frac{k+2}{k+1} \right) -2\log(k(k+2))\log(k+1) \bigg]  \end{eqnarray*}

Here, we have an analogous result for $T_{e,23,e}$.

\begin{cor}

For $T_{e,23,e},$  we have 
$$p(0)=\frac{1}{2},$$
 while for $k>0,$

$$p(k)=\int_{\frac{1}{k+2}}^{\frac{1}{k+1}} \left(\int_{\frac{1-x}{k+1}}^x \frac{6}{\pi ^2 x (1-y)} \,
   dy\right) \, dx+\int_{\frac{1}{k+1}}^1 \left(\int_{\frac{1-x}{k+1}}^{\frac{1-x}{k}} \frac{6}{\pi ^2
   x (1-y)} \, dy\right) \, dx.$$

\end{cor}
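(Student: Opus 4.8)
The plan is to apply Theorem~\ref{theorem:GK} to $T_{e,23,e}$ and then reduce the claim to an explicit area integral against the density $r(x,y)=\frac{6}{\pi^2 x(1-y)}$. First I would check the hypotheses: $T_{e,23,e}$ is ergodic with respect to Lebesgue measure by Jensen~\cite{Jensen}, its invariant measure is the one listed in the table of Theorem~\ref{theorem:GK}, and this measure is a probability measure, since
$$\int_{\triangle}\frac{6}{\pi^2 x(1-y)}\,dx\,dy=\frac{6}{\pi^2}\int_0^1\frac{-\log(1-x)}{x}\,dx=\frac{6}{\pi^2}\,\mathrm{Li}_2(1)=1,$$
so $\mu$ and $\lambda$ are mutually absolutely continuous. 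Theorem~\ref{theorem:GK} then gives $p(k)=\mu(\triangle_k)=\int_{\triangle_k}\frac{6}{\pi^2 x(1-y)}\,dx\,dy$, where $\triangle_k\subset\triangle$ is the cylinder set of those $(x,y)$ whose first $(e,23,e)$-digit $a_1$ equals $k$ (by the ergodic theorem the digit frequency $P_k$ equals $\mu(\triangle_k)$ for $\mu$-a.e., hence $\lambda$-a.e., point, which is the content of Theorem~\ref{theorem:GK}).

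The substantive step is to describe $\triangle_k$ explicitly. Using the branch structure of $T_{e,23,e}$ from \cite{amburg-Garrity I} --- equivalently, reading the $k$-th inverse branch $\psi_k$ off the transfer-operator summand in Appendix~\ref{app:formoftransfer} and using $\triangle_k=\psi_k(\triangle)$ --- I would map the three edges $y=0$, $y=x$, $x=1$ of $\triangle$ under $\psi_k$ to recover the boundary of $\triangle_k$, and thereby verify that $\triangle_0=\{\,1/2<x<1,\ 1-x<y<x\,\}$ while for $k\ge 1$
$$\triangle_k=\Bigl\{\tfrac1{k+2}<x<\tfrac1{k+1},\ \tfrac{1-x}{k+1}<y<x\Bigr\}\ \cup\ \Bigl\{\tfrac1{k+1}<x<1,\ \tfrac{1-x}{k+1}<y<\tfrac{1-x}{k}\Bigr\}.$$
This geometric identification of the cylinder sets is the map-specific heart of the argument and the main obstacle; once it is in hand the rest is, as the sentence preceding the corollary already notes, exactly analogous to the computations in \cite{Garrity15}.

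Granting the decomposition, the formula for $k>0$ is immediate: it is precisely $\int_{\triangle_k}\frac{6}{\pi^2 x(1-y)}\,dx\,dy$ written as an iterated integral over the two pieces of $\triangle_k$ (integrating $y$ first). For $k=0$ I would evaluate directly: $\int_{1-x}^{x}\frac{dy}{1-y}=\log\frac{x}{1-x}$, hence
$$p(0)=\frac{6}{\pi^2}\int_{1/2}^{1}\frac{\log x-\log(1-x)}{x}\,dx=\frac{6}{\pi^2}\Bigl(-\tfrac12\log^2 2+\mathrm{Li}_2(1)-\mathrm{Li}_2(\tfrac12)\Bigr),$$
and substituting $\mathrm{Li}_2(1)=\pi^2/6$ and $\mathrm{Li}_2(1/2)=\pi^2/12-\tfrac12\log^2 2$ collapses the bracket to $\pi^2/12$, giving $p(0)=\tfrac12$. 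As a consistency check one could instead note $\triangle=\bigsqcup_{k\ge 0}\triangle_k$ and compute $p(0)=1-\sum_{k\ge1}p(k)$ by summing the resulting dilogarithm series, which telescopes to the same value; the direct route above is shorter.
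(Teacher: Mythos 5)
Your proposal is correct and follows essentially the same route as the paper's proof: verify ergodicity via Jensen, note absolute continuity of $\mu$ with respect to $\lambda$ from the density $\frac{6}{\pi^2 x(1-y)}$, apply Theorem~\ref{theorem:GK} to get $p(k)=\int_{\triangle_k}d\mu$, and identify the cylinder sets $\triangle_0=\{1/2<x<1,\,1-x<y<x\}$ and $\triangle_k$ to obtain the stated iterated integrals. The only difference is that you spell out the dilogarithm evaluation of $p(0)=\tfrac12$, which the paper leaves as a ``direct calculation.''
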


\begin{proof}

First, we know that $T_{e,23,e}$ is ergodic with respect to the Lebesgue measure. Since for $T_{e,23,e},$ $h(x,y)=\frac{6}{\pi ^2 x (1-y)},$ it follows that $\mu$ and $\lambda$ are absolutely continuous. Hence, by Theorem~\ref{theorem:GK},
it follows that 
$p(k)=\int_{\bigtriangleup_k}d\mu.$
By direct calculation, it is evident that 
$$p(0)=\int_{\bigtriangleup_0}d\mu
=\int_{\frac{1}{2}}^1 \left(\int_{1-x}^x \frac{6}{\pi ^2 x (1-y)} \, dy\right) \, dx
=\frac{1}{2}.$$
while for $k>0,$
$p(k)=\int_{\bigtriangleup_k}d\mu
=\int_{\frac{1}{k+2}}^{\frac{1}{k+1}} \left(\int_{\frac{1-x}{k+1}}^x \frac{6}{\pi ^2 x (1-y)} \,
   dy\right) \, dx+\int_{\frac{1}{k+1}}^1 \left(\int_{\frac{1-x}{k+1}}^{\frac{1-x}{k}} \frac{6}{\pi ^2
   x (1-y)} \, dy\right) \, dx.$

\end{proof}

For the many other triangle partition maps, the Gauss-Kuzmin statistics are still unknown.

\section{Conclusion}\label{conclusion}
This paper and its companion \cite{amburg-Garrity I}   have begun the study of the transfer operators for triangle partition maps, a family  that puts into one framework most well-known multidimensional continued fraction algorithms.  We have seen that for many of these triangle partition maps, basic questions about the transfer operators remain.

We have studied the functional analysis behind triangle partition maps. In particular, we have partitioned this family of 216 multidimensional continued fractions into two natural classes, with exactly half exhibiting what we term ``polynomial behavior" and the other half exhibiting ``non-polynomial behavior."  We  have found eigenfunctions of eigenvalue 1 for transfer operators associated with 17  polynomial-behavior triangle partition maps\footnote{The corresponding eigenfunction for $T_{e,e,e}$ was already well-known.}. The formidably complex form of the non-polynomial-behavior transfer operators and lack of appropriate techniques makes finding leading eigenfunctions prohibitively difficult; however, finding leading eigenfunctions for the remaining 90 polynomial-behavior transfer operators appears a doable, though computationally-intensive, problem.  Proving ergodicity for more triangle partition maps (see \cite{Jensen} and \cite{Mess} for an outline of current ergodicity proofs) would help extend our results to more triangle maps. Finally, we have shown the nuclearity of transfer operators, thought of as acting on one-dimensional families of Hilbert spaces, associated with 44 polynomial-behavior maps. Can transfer operators associated with all polynomial-behavior triangle partition maps be classified as nuclear?

There are also more general questions.  There is the question of connecting the properties of these transfer operators with Diophantine approximation properties (possibly linking with work stemming from \cite{Lagarias93}).  also, as mentioned in \cite{Garrity15}, how much of the rich body of work done over the years on the transfer operator for the Gauss map, pioneered by the work   of Mayer and Roepstorff  \cite{Mayer, Mayer-Roepstorff1988} and of Mayer \cite{Mayer90, Mayer91},  and continuing  today in work of Vall\'{e}e \cite{Vallee97, Vallee98}, of Isola, antoniou and Shkarin \cite{Isola02, antoniou-Shkarin03},  of Jenkinson, Gonzalez and  Urbanksi  \cite{Jenkinson-Gonzalez-Urbanksi03}, of Degli, Espost, Isola and Knauf  \cite{DegliEspost-Isola-Knauf07},  of Hilgert \cite{Hilgert08},  of Bonanno, Graffi and Isola \cite{Bonanno-Graffi-Isola08}, of alkauskas \cite{alkauskas12},  of Iosifescu \cite{Iosifescu14}, of Bonanno and Isola \cite{Bonanno-Isola14},  and of  Ben Ammou, Bonanno, Chouari and Isola \cite{Ben ammou-Bonanno-Chouari-Isola15}, has analogs for triangle partition maps?  These questions strike us as natural but non-trivial.

\clearpage
\appendix

\section{Form of $T_{\sigma,\tau_0,\tau_1}(x,y)$ for Polynomial-Behavior Maps}
\label{app:TRIPmaps}

$$\begin{array}{c|c}
 (e,e,e) & \text{}\left(\frac{y}{x},-\frac{x+k y-1}{x}\right) \\\hline
 (e,e,12) & \text{}\left(\frac{4 y}{(-1)^k (4 x-y-2)-2 k y+y+2},\frac{4-4 k y}{(-1)^k
   (4 x-y-2)-2 k y+y+2}-1\right) \\\hline
 (e,e,23) & \text{}\left(\frac{1}{2} \left(1+(-1)^{k+1}\right)+\frac{(-1)^k
   y}{x},\frac{-\left(2 k+(-1)^k+3\right) x+2 \left(-1+(-1)^k\right) y+4}{4 x}\right) \\\hline
 (e,12,e) & \text{}\left(\frac{1-(k+1) y}{x},-\frac{x+k y-1}{x}\right) \\\hline
 (e,12,12) & \text{}\left(-\frac{4 (k y+y-1)}{(-1)^k (4 x-y-2)-2 k y+y+2},\frac{4-4 k
   y}{(-1)^k (4 x-y-2)-2 k y+y+2}-1\right) \\\hline
 (e,12,23) & \text{}\left(-\frac{2 k x+x+(-1)^{k+1} (x-2 y)+2 y-4}{4 x},\frac{-\left(2
   k+(-1)^k+3\right) x+2 \left(-1+(-1)^k\right) y+4}{4 x}\right) \\\hline
 (e,13,e) & \text{}\left(\frac{2 x+k y-1}{x},1-\frac{y}{x}\right) \\\hline
 (e,13,12) & \text{}\left(\frac{4 k y-4}{(-1)^k (4 x-y-2)-2 k y+y+2}+2,1-\frac{4 y}{(-1)^k (4
   x-y-2)-2 k y+y+2}\right) \\\hline
 (e,13,23) & \text{}\left(\frac{2 k x+7 x+(-1)^k (x-2 y)+2 y-4}{4 x},\frac{x+(-1)^k (x-2 y)}{2
   x}\right) \\\hline
 (e,23,e) & \text{}\left(\frac{y}{x},\frac{x+k y+y-1}{x}\right) \\\hline
 (e,23,12) & \text{}\left(\frac{4 y}{(-1)^k (4 x-y-2)-2 k y+y+2},\frac{4 (k y+y-1)}{(-1)^k (4
   x-y-2)-2 k y+y+2}+1\right) \\\hline
 (e,23,23) & \text{}\left(\frac{1}{2} \left(1+(-1)^{k+1}\right)+\frac{(-1)^k
   y}{x},\frac{\left(2 k+(-1)^{k+1}+5\right) x+2 \left((-1)^k y+y-2\right)}{4 x}\right) \\\hline
 (e,123,e) & \text{}\left(\frac{2 x+k y-1}{x},\frac{x+k y+y-1}{x}\right) \\\hline
 (e,123,12) & \text{}\left(\frac{4 k y-4}{(-1)^k (4 x-y-2)-2 k y+y+2}+2,\frac{4 (k
   y+y-1)}{(-1)^k (4 x-y-2)-2 k y+y+2}+1\right) \\\hline
 (e,123,23) & \text{}\left(\frac{2 k x+7 x+(-1)^k (x-2 y)+2 y-4}{4 x},\frac{\left(2
   k+(-1)^{k+1}+5\right) x+2 \left((-1)^k y+y-2\right)}{4 x}\right) \\\hline
 (e,132,e) & \text{}\left(\frac{1-(k+1) y}{x},1-\frac{y}{x}\right) \\\hline
 (e,132,12) & \text{}\left(-\frac{4 (k y+y-1)}{(-1)^k (4 x-y-2)-2 k y+y+2},1-\frac{4 y}{(-1)^k
   (4 x-y-2)-2 k y+y+2}\right) \\\hline
 (e,132,23) & \text{}\left(-\frac{2 k x+x+(-1)^{k+1} (x-2 y)+2 y-4}{4 x},\frac{x+(-1)^k (x-2
   y)}{2 x}\right) \\
\end{array}$$

$$\begin{array}{c|c}
 (12,e,e) & \text{}\left(\frac{4 y}{(-1)^{k+1} (4 x-3 y-2)-2 k y+y+2},\frac{4 k y-4}{(-1)^k (4 x-3 y-2)+2 k y-y-2}-1\right) \\\hline
 (12,e,12) & \text{}\left(\frac{y}{-x+y+1},\frac{-x+k y+y}{x-y-1}\right) \\\hline
 (12,e,123) & \text{}\left(\frac{1}{2}-\frac{(-1)^k (x+y-1)}{2 (x-y-1)},\frac{-3 x+5 y+2 k (-x+y+1)+(-1)^{k+1} (x+y-1)-1}{4 (x-y-1)}\right) \\\hline
 (12,12,e) & \text{}\left(\frac{4 (k y+y-1)}{(-1)^k (4 x-3 y-2)+2 k y-y-2},\frac{4 k y-4}{(-1)^k (4 x-3 y-2)+2 k y-y-2}-1\right) \\\hline
 (12,12,12) & \text{}\left(\frac{k y+y-1}{x-y-1},\frac{-x+k y+y}{x-y-1}\right) \\\hline
 (12,12,123) & \text{}\left(\frac{-x+3 y+2 k (-x+y+1)+(-1)^k (x+y-1)-3}{4 (x-y-1)},\frac{-3 x+5 y+2 k (-x+y+1)+(-1)^{k+1} (x+y-1)-1}{4 (x-y-1)}\right) \\\hline
 (12,13,e) & \text{}\left(\frac{4-4 k y}{(-1)^k (4 x-3 y-2)+2 k y-y-2}+2,1-\frac{4 y}{(-1)^{k+1} (4 x-3 y-2)-2 k y+y+2}\right) \\\hline
 (12,13,12) & \text{}\left(\frac{-2 x+(k+2) y+1}{-x+y+1},\frac{x-1}{x-y-1}\right) \\\hline
 (12,13,123) & \text{}\left(\frac{7 x+2 k (x-y-1)-9 y+(-1)^k (x+y-1)-3}{4 (x-y-1)},\frac{1}{2} \left(\frac{(-1)^k (x+y-1)}{x-y-1}+1\right)\right) \\\hline
 (12,23,e) & \text{}\left(\frac{4 y}{(-1)^{k+1} (4 x-3 y-2)-2 k y+y+2},\frac{4-4 (k+1) y}{(-1)^k (4 x-3 y-2)+2 k y-y-2}+1\right) \\\hline
 (12,23,12) & \text{}\left(\frac{y}{-x+y+1},\frac{(k+2) y-x}{-x+y+1}\right) \\\hline
 (12,23,123) & \text{}\left(\frac{1}{2}-\frac{(-1)^k (x+y-1)}{2 (x-y-1)},\frac{5 x+2 k (x-y-1)-7 y+(-1)^{k+1} (x+y-1)-1}{4 (x-y-1)}\right) \\\hline
 (12,123,e) & \text{}\left(\frac{4-4 k y}{(-1)^k (4 x-3 y-2)+2 k y-y-2}+2,\frac{4-4 (k+1) y}{(-1)^k (4 x-3 y-2)+2 k y-y-2}+1\right) \\\hline
 (12,123,12) & \text{}\left(\frac{-2 x+(k+2) y+1}{-x+y+1},\frac{(k+2) y-x}{-x+y+1}\right) \\\hline
 (12,123,123) & \text{}\left(\frac{7 x+2 k (x-y-1)-9 y+(-1)^k (x+y-1)-3}{4 (x-y-1)},\frac{5 x+2 k (x-y-1)-7 y+(-1)^{k+1} (x+y-1)-1}{4 (x-y-1)}\right) \\\hline
 (12,132,e) & \text{}\left(\frac{4 (k y+y-1)}{(-1)^k (4 x-3 y-2)+2 k y-y-2},1-\frac{4 y}{(-1)^{k+1} (4 x-3 y-2)-2 k y+y+2}\right) \\\hline
 (12,132,12) & \text{}\left(\frac{k y+y-1}{x-y-1},\frac{x-1}{x-y-1}\right) \\\hline
 (12,132,123) & \text{}\left(\frac{-x+3 y+2 k (-x+y+1)+(-1)^k (x+y-1)-3}{4 (x-y-1)},\frac{1}{2} \left(\frac{(-1)^k (x+y-1)}{x-y-1}+1\right)\right) \\
\end{array}$$

$$\begin{array}{c|c}
 (13,e,13) & \text{}\left(\frac{x-1}{y-1},\frac{-x k+k-y}{y-1}\right) \\\hline
 (13,e,123) & \text{}\left(\frac{4-4 x}{2 k (x-1)-x+(-1)^k (x-4 y+1)+3},\frac{4 k (x-1)+4}{2 k (x-1)-x+(-1)^k (x-4 y+1)+3}-1\right) \\\hline
 (13,e,132) & \text{}\left(\frac{1}{2}-\frac{(-1)^k (-2 x+y+1)}{2 (y-1)},\frac{-2 x+(-1)^k (2 x-y-1)-2 k (y-1)-3 y+1}{4 (y-1)}\right) \\\hline
 (13,12,13) & \text{}\left(\frac{k-(k+1) x}{y-1},\frac{-x k+k-y}{y-1}\right) \\\hline
 (13,12,123) & \text{}\left(\frac{4 (k (x-1)+x)}{2 k (x-1)-x+(-1)^k (x-4 y+1)+3},\frac{4 k (x-1)+4}{2 k (x-1)-x+(-1)^k (x-4 y+1)+3}-1\right) \\\hline
 (13,12,132) & \text{}\left(\frac{-2 \left(1+(-1)^k\right) x-2 k (y-1)+\left(-1+(-1)^k\right) (y+1)}{4 (y-1)},\frac{-2 x+(-1)^k (2 x-y-1)-2 k (y-1)-3 y+1}{4 (y-1)}\right) \\\hline
 (13,13,13) & \text{}\left(\frac{k (x-1)+1}{y-1}+2,\frac{y-x}{y-1}\right) \\\hline
 (13,13,123) & \text{}\left(\frac{1}{\frac{k (x-1)+1}{-x+(-1)^k (x-4 y+1)+1}+\frac{1}{2}},\frac{4 (x-1)}{2 k (x-1)-x+(-1)^k (x-4 y+1)+3}+1\right) \\\hline
 (13,13,132) & \text{}\left(\frac{-2 \left(-1+(-1)^k\right) x+2 k (y-1)+7 y+(-1)^k (y+1)-5}{4 (y-1)},\frac{1}{2} \left(\frac{(-1)^k (-2 x+y+1)}{y-1}+1\right)\right) \\\hline
 (13,23,13) & \text{}\left(\frac{x-1}{y-1},\frac{(k+1) (x-1)+y}{y-1}\right) \\\hline
 (13,23,123) & \text{}\left(\frac{4-4 x}{2 k (x-1)-x+(-1)^k (x-4 y+1)+3},\frac{4 k-4 (k+1) x}{2 k (x-1)-x+(-1)^k (x-4 y+1)+3}+1\right) \\\hline
 (13,23,132) & \text{}\left(\frac{1}{2}-\frac{(-1)^k (-2 x+y+1)}{2 (y-1)},\frac{2 x+(-1)^k (2 x-y-1)+2 k (y-1)+5 y-3}{4 (y-1)}\right) \\\hline
 (13,123,13) & \text{}\left(\frac{k (x-1)+1}{y-1}+2,\frac{(k+1) (x-1)+y}{y-1}\right) \\\hline
 (13,123,123) & \text{}\left(\frac{1}{\frac{k (x-1)+1}{-x+(-1)^k (x-4 y+1)+1}+\frac{1}{2}},\frac{4 k-4 (k+1) x}{2 k (x-1)-x+(-1)^k (x-4 y+1)+3}+1\right) \\\hline
 (13,123,132) & \text{}\left(\frac{-2 \left(-1+(-1)^k\right) x+2 k (y-1)+7 y+(-1)^k (y+1)-5}{4 (y-1)},\frac{2 x+(-1)^k (2 x-y-1)+2 k (y-1)+5 y-3}{4 (y-1)}\right) \\\hline
 (13,132,13) & \text{}\left(\frac{k-(k+1) x}{y-1},\frac{y-x}{y-1}\right) \\\hline
 (13,132,123) & \text{}\left(\frac{4 (k (x-1)+x)}{2 k (x-1)-x+(-1)^k (x-4 y+1)+3},\frac{4 (x-1)}{2 k (x-1)-x+(-1)^k (x-4 y+1)+3}+1\right) \\\hline
 (13,132,132) & \text{}\left(\frac{-2 \left(1+(-1)^k\right) x-2 k (y-1)+\left(-1+(-1)^k\right) (y+1)}{4 (y-1)},\frac{1}{2} \left(\frac{(-1)^k (-2 x+y+1)}{y-1}+1\right)\right) \\
\end{array}$$

$$\begin{array}{c|c}
 (23,e,e) & \text{}\left(\frac{x+(-1)^k (x-2 y)}{2 x},\frac{-2 k x-5 x+(-1)^k (x-2 y)+2 y+4}{4 x}\right) \\\hline
 (23,e,23) & \text{}\left(1-\frac{y}{x},\frac{-(k+1) x+k y+1}{x}\right) \\\hline
 (23,e,132) & \text{}\left(\frac{4 (x-y)}{-2 k x+x+2 k y-y+(-1)^k (3 x+y-2)+2},\frac{4 k (y-x)+4}{-2 k x+x+2 k y-y+(-1)^k (3 x+y-2)+2}-1\right) \\\hline
 (23,12,e) & \text{}\left(\frac{2 \left((-1)^k y+y+2\right)-\left(2 k+(-1)^k+3\right) x}{4 x},\frac{-2 k x-5 x+(-1)^k (x-2 y)+2 y+4}{4 x}\right) \\\hline
 (23,12,23) & \text{}\left(\frac{-(k+1) x+k y+y+1}{x},\frac{-(k+1) x+k y+1}{x}\right) \\\hline
 (23,12,132) & \text{}\left(\frac{4 (-(k+1) x+k y+y+1)}{-2 k x+x+2 k y-y+(-1)^k (3 x+y-2)+2},\frac{4 k (y-x)+4}{-2 k x+x+2 k y-y+(-1)^k (3 x+y-2)+2}-1\right) \\\hline
 (23,13,e) & \text{}\left(\frac{\left(2 k+(-1)^{k+1}+9\right) x+2 \left(-1+(-1)^k\right) y-4}{4 x},\frac{1}{2} \left(1+(-1)^{k+1}\right)+\frac{(-1)^k y}{x}\right) \\\hline
 (23,13,23) & \text{}\left(\frac{(k+2) x-k y-1}{x},\frac{y}{x}\right) \\\hline
 (23,13,132) & \text{}\left(\frac{1}{\frac{k (y-x)+1}{x-y+(-1)^k (3 x+y-2)}+\frac{1}{2}},\frac{4 y-4 x}{-2 k x+x+2 k y-y+(-1)^k (3 x+y-2)+2}+1\right) \\\hline
 (23,23,e) & \text{}\left(\frac{x+(-1)^k (x-2 y)}{2 x},\frac{\left(2 k+(-1)^k+7\right) x-2 \left((-1)^k y+y+2\right)}{4 x}\right) \\\hline
 (23,23,23) & \text{}\left(1-\frac{y}{x},\frac{(k+2) x-(k+1) y-1}{x}\right) \\\hline
 (23,23,132) & \text{}\left(\frac{4 (x-y)}{-2 k x+x+2 k y-y+(-1)^k (3 x+y-2)+2},\frac{4 (k x+x-(k+1) y-1)}{-2 k x+x+2 k y-y+(-1)^k (3 x+y-2)+2}+1\right) \\\hline
 (23,123,e) & \text{}\left(\frac{\left(2 k+(-1)^{k+1}+9\right) x+2 \left(-1+(-1)^k\right) y-4}{4 x},\frac{\left(2 k+(-1)^k+7\right) x-2 \left((-1)^k y+y+2\right)}{4 x}\right) \\\hline
 (23,123,23) & \text{}\left(\frac{(k+2) x-k y-1}{x},\frac{(k+2) x-(k+1) y-1}{x}\right) \\\hline
 (23,123,132) & \text{}\left(\frac{1}{\frac{k (y-x)+1}{x-y+(-1)^k (3 x+y-2)}+\frac{1}{2}},\frac{4 (k x+x-(k+1) y-1)}{-2 k x+x+2 k y-y+(-1)^k (3 x+y-2)+2}+1\right) \\\hline
 (23,132,e) & \text{}\left(\frac{2 \left((-1)^k y+y+2\right)-\left(2 k+(-1)^k+3\right) x}{4 x},\frac{1}{2} \left(1+(-1)^{k+1}\right)+\frac{(-1)^k y}{x}\right) \\\hline
 (23,132,23) & \text{}\left(\frac{-(k+1) x+k y+y+1}{x},\frac{y}{x}\right) \\\hline
 (23,132,132) & \text{}\left(\frac{4 (-(k+1) x+k y+y+1)}{-2 k x+x+2 k y-y+(-1)^k (3 x+y-2)+2},\frac{4 y-4 x}{-2 k x+x+2 k y-y+(-1)^k (3 x+y-2)+2}+1\right) \\
\end{array}$$

$$\begin{array}{c|c}
 (123,e,13) & \text{}\left(\frac{1}{2} \left(\frac{(-1)^k (-2 x+y+1)}{y-1}+1\right),-\frac{-2 x+(-1)^k (2 x-y-1)+2 k (y-1)+5 y+1}{4 (y-1)}\right) \\\hline
 (123,e,23) & \text{}\left(\frac{4 y-4 x}{2 k x-x-2 k y+y+(-1)^k (x+3 y-2)-2},\frac{4 k (x-y)-4}{2 k x-x-2 k y+y+(-1)^k (x+3 y-2)-2}-1\right) \\\hline
 (123,e,132) & \text{}\left(\frac{y-x}{y-1},\frac{k x-(k+1) y}{y-1}\right) \\\hline
 (123,12,13) & \text{}\left(-\frac{-2 \left(1+(-1)^k\right) x+2 k (y-1)+\left(3+(-1)^k\right) (y+1)}{4 (y-1)},-\frac{-2 x+(-1)^k (2 x-y-1)+2 k (y-1)+5 y+1}{4 (y-1)}\right) \\\hline
 (123,12,23) & \text{}\left(\frac{4 (k x+x-(k+1) y-1)}{2 k x-x-2 k y+y+(-1)^k (x+3 y-2)-2},\frac{4 k (x-y)-4}{2 k x-x-2 k y+y+(-1)^k (x+3 y-2)-2}-1\right) \\\hline
 (123,12,132) & \text{}\left(\frac{k x+x-(k+1) y-1}{y-1},\frac{k x-(k+1) y}{y-1}\right) \\\hline
 (123,13,13) & \text{}\left(\frac{-2 x+(-1)^k (2 x-y-1)+2 k (y-1)+9 y-3}{4 (y-1)},\frac{1}{2}-\frac{(-1)^k (-2 x+y+1)}{2 (y-1)}\right) \\\hline
 (123,13,23) & \text{}\left(\frac{1}{\frac{k (x-y)-1}{-x+y+(-1)^k (x+3 y-2)}+\frac{1}{2}},\frac{4 (x-y)}{2 k x-x-2 k y+y+(-1)^k (x+3 y-2)-2}+1\right) \\\hline
 (123,13,132) & \text{}\left(k+\frac{-x k+k+1}{y-1}+2,\frac{x-1}{y-1}\right) \\\hline
 (123,23,13) & \text{}\left(\frac{1}{2} \left(\frac{(-1)^k (-2 x+y+1)}{y-1}+1\right),\frac{-2 \left(1+(-1)^k\right) x+2 k (y-1)+7 y+(-1)^k (y+1)-1}{4 (y-1)}\right) \\\hline
 (123,23,23) & \text{}\left(\frac{4 y-4 x}{2 k x-x-2 k y+y+(-1)^k (x+3 y-2)-2},\frac{4 (-(k+1) x+k y+y+1)}{2 k x-x-2 k y+y+(-1)^k (x+3 y-2)-2}+1\right) \\\hline
 (123,23,132) & \text{}\left(\frac{y-x}{y-1},\frac{(k+2) y-(k+1) x}{y-1}\right) \\\hline
 (123,123,13) & \text{}\left(\frac{-2 x+(-1)^k (2 x-y-1)+2 k (y-1)+9 y-3}{4 (y-1)},\frac{-2 \left(1+(-1)^k\right) x+2 k (y-1)+7 y+(-1)^k (y+1)-1}{4 (y-1)}\right) \\\hline
 (123,123,23) & \text{}\left(\frac{1}{\frac{k (x-y)-1}{-x+y+(-1)^k (x+3 y-2)}+\frac{1}{2}},\frac{4 (-(k+1) x+k y+y+1)}{2 k x-x-2 k y+y+(-1)^k (x+3 y-2)-2}+1\right) \\\hline
 (123,123,132) & \text{}\left(k+\frac{-x k+k+1}{y-1}+2,\frac{(k+2) y-(k+1) x}{y-1}\right) \\\hline
 (123,132,13) & \text{}\left(-\frac{-2 \left(1+(-1)^k\right) x+2 k (y-1)+\left(3+(-1)^k\right) (y+1)}{4 (y-1)},\frac{1}{2}-\frac{(-1)^k (-2 x+y+1)}{2 (y-1)}\right) \\\hline
 (123,132,23) & \text{}\left(\frac{4 (k x+x-(k+1) y-1)}{2 k x-x-2 k y+y+(-1)^k (x+3 y-2)-2},\frac{4 (x-y)}{2 k x-x-2 k y+y+(-1)^k (x+3 y-2)-2}+1\right) \\\hline
 (123,132,132) & \text{}\left(\frac{k x+x-(k+1) y-1}{y-1},\frac{x-1}{y-1}\right) \\
\end{array}$$

$$\begin{array}{c|c}
 (132,e,12) & \text{}\left(\frac{1}{2} \left(\frac{(-1)^k (x+y-1)}{x-y-1}+1\right),\frac{-5 x+3 y+2 k (-x+y+1)+(-1)^k (x+y-1)+1}{4 (x-y-1)}\right) \\\hline
 (132,e,13) & \text{}\left(\frac{4 (x-1)}{-2 k (x-1)+x+(-1)^k (3 x-4 y-1)-3},\frac{4 k (x-1)+4}{2 k (x-1)-x+(-1)^{k+1} (3 x-4 y-1)+3}-1\right) \\\hline
 (132,e,123) & \text{}\left(\frac{x-1}{x-y-1},\frac{k (x-1)+1}{-x+y+1}-1\right) \\\hline
 (132,12,12) & \text{}\left(\frac{-3 x+y+2 k (-x+y+1)+(-1)^{k+1} (x+y-1)-1}{4 (x-y-1)},\frac{-5 x+3 y+2 k (-x+y+1)+(-1)^k (x+y-1)+1}{4 (x-y-1)}\right) \\\hline
 (132,12,13) & \text{}\left(\frac{4 (k (x-1)+x)}{2 k (x-1)-x+(-1)^{k+1} (3 x-4 y-1)+3},\frac{4 k (x-1)+4}{2 k (x-1)-x+(-1)^{k+1} (3 x-4 y-1)+3}-1\right) \\\hline
 (132,12,123) & \text{}\left(\frac{k (x-1)+x}{-x+y+1},\frac{k (x-1)+1}{-x+y+1}-1\right) \\\hline
 (132,13,12) & \text{}\left(\frac{9 x+2 k (x-y-1)-7 y+(-1)^{k+1} (x+y-1)-5}{4 (x-y-1)},\frac{1}{2}-\frac{(-1)^k (x+y-1)}{2 (x-y-1)}\right) \\\hline
 (132,13,13) & \text{}\left(\frac{-4 k (x-1)-4}{2 k (x-1)-x+(-1)^{k+1} (3 x-4 y-1)+3}+2,\frac{4-4 x}{-2 k (x-1)+x+(-1)^k (3 x-4 y-1)-3}+1\right) \\\hline
 (132,13,123) & \text{}\left(\frac{-x k+k-1}{-x+y+1}+2,\frac{y}{-x+y+1}\right) \\\hline
 (132,23,12) & \text{}\left(\frac{1}{2} \left(\frac{(-1)^k (x+y-1)}{x-y-1}+1\right),\frac{7 x+2 k (x-y-1)-5 y+(-1)^k (x+y-1)-3}{4 (x-y-1)}\right) \\\hline
 (132,23,13) & \text{}\left(\frac{4 (x-1)}{-2 k (x-1)+x+(-1)^k (3 x-4 y-1)-3},\frac{4 k-4 (k+1) x}{2 k (x-1)-x+(-1)^{k+1} (3 x-4 y-1)+3}+1\right) \\\hline
 (132,23,123) & \text{}\left(\frac{x-1}{x-y-1},\frac{-x k+k-2 x+y+1}{-x+y+1}\right) \\\hline
 (132,123,12) & \text{}\left(\frac{9 x+2 k (x-y-1)-7 y+(-1)^{k+1} (x+y-1)-5}{4 (x-y-1)},\frac{7 x+2 k (x-y-1)-5 y+(-1)^k (x+y-1)-3}{4 (x-y-1)}\right) \\\hline
 (132,123,13) & \text{}\left(\frac{-4 k (x-1)-4}{2 k (x-1)-x+(-1)^{k+1} (3 x-4 y-1)+3}+2,\frac{4 k-4 (k+1) x}{2 k (x-1)-x+(-1)^{k+1} (3 x-4 y-1)+3}+1\right) \\\hline
 (132,123,123) & \text{}\left(\frac{-x k+k-1}{-x+y+1}+2,\frac{-x k+k-2 x+y+1}{-x+y+1}\right) \\\hline
 (132,132,12) & \text{}\left(\frac{-3 x+y+2 k (-x+y+1)+(-1)^{k+1} (x+y-1)-1}{4 (x-y-1)},\frac{1}{2}-\frac{(-1)^k (x+y-1)}{2 (x-y-1)}\right) \\\hline
 (132,132,13) & \text{}\left(\frac{4 (k (x-1)+x)}{2 k (x-1)-x+(-1)^{k+1} (3 x-4 y-1)+3},\frac{4-4 x}{-2 k (x-1)+x+(-1)^k (3 x-4 y-1)-3}+1\right) \\\hline
 (132,132,123) & \text{}\left(\frac{k (x-1)+x}{-x+y+1},\frac{y}{-x+y+1}\right) \\
\end{array}$$

\section{Form of $\mathcal{L}_{T_{\sigma,\tau_0,\tau_1}}f(x,y)$ for Polynomial-Behavior Maps}
\label{app:formoftransfer}

$$\scalebox{0.9}{$\begin{array}{c|c}
 (e,e,e) & \sum_{k=0}^{\infty}\frac{1}{(k x+y+1)^3}f\left(\frac{1}{k x+y+1},\frac{x}{k x+y+1}\right) \\\hline
 (e,e,12) & \sum_{k=0}^{\infty}\frac{1}{(k x+y+1)^3}f\left(\frac{(-1)^k \left(-x-2 y+(-1)^k (2 k x+x+2 y+2)+2\right)}{4 (k x+y+1)},\frac{x}{k x+y+1}\right) \\\hline
 (e,e,23) & \sum_{k=0}^{\infty}\frac{64}{\left| 2 x+(-1)^k (2 k-2 x+4 y+5)-1\right| ^3}f\left(\frac{4 (-1)^k}{2 x+(-1)^k (2 k-2 x+4 y+5)-1},-\frac{2 \left(2 x+(-1)^k-1\right)}{-2 x+(-1)^{k+1} (2 k-2 x+4
   y+5)+1}\right) \\\hline
 (e,12,e) & \sum_{k=0}^{\infty}\frac{1}{(-x k+y k+k+y+1)^3}f\left(\frac{1}{-x k+y k+k+y+1},\frac{-x+y+1}{-x k+y k+k+y+1}\right) \\\hline
 (e,12,12) & \sum_{k=0}^{\infty}\frac{1}{(-x k+y k+k+y+1)^3}f\left(-\frac{x+(-1)^{k+1} (x-3 y+1)+2 k (x-y-1)-3 y-3}{4 (-x k+y k+k+y+1)},\frac{-x+y+1}{-x k+y k+k+y+1}\right) \\\hline
 (e,12,23) & \sum_{k=0}^{\infty}\frac{64}{\left| -2 x+2 y+(-1)^k (2 k+2 x+2 y+3)+1\right| ^3}f\left(\frac{4 (-1)^k}{-2 x+2 y+(-1)^k (2 k+2 x+2 y+3)+1},\frac{2 \left(-2 x+(-1)^k+2 y+1\right)}{-2 x+2 y+(-1)^k (2 k+2 x+2
   y+3)+1}\right) \\\hline
 (e,13,e) & \sum_{k=0}^{\infty}\frac{1}{(-y k+k-x+2)^3}f\left(\frac{1}{-y k+k-x+2},\frac{y-1}{x+k (y-1)-2}\right) \\\hline
 (e,13,12) & \sum_{k=0}^{\infty}\frac{1}{\left| x+k (y-1)-2\right| ^3}f\left(\frac{(-1)^k \left(-2 x-y+(-1)^k (2 x+2 k (y-1)+y-5)+1\right)}{4 (x+k (y-1)-2)},\frac{y-1}{x+k (y-1)-2}\right) \\\hline
 (e,13,23) & \sum_{k=0}^{\infty}\frac{64}{\left| -2 y+(-1)^k (2 k-4 x+2 y+7)+1\right| ^3}f\left(\frac{4 (-1)^k}{-2 y+(-1)^k (2 k-4 x+2 y+7)+1},\frac{2 \left(-2 y+(-1)^k+1\right)}{-2 y+(-1)^k (2 k-4 x+2 y+7)+1}\right) \\\hline
 (e,23,e) & \sum_{k=0}^{\infty}\frac{1}{(k x+x-y+1)^3}f\left(\frac{1}{k x+x-y+1},\frac{x}{k x+x-y+1}\right) \\\hline
 (e,23,12) & \sum_{k=0}^{\infty}\frac{1}{(k x+x-y+1)^3}f\left(\frac{(-1)^k \left(-3 x+(-1)^k ((2 k+3) x-2 y+2)+2 y+2\right)}{4 (k x+x-y+1)},\frac{x}{k x+x-y+1}\right) \\\hline
 (e,23,23) & \sum_{k=0}^{\infty}\frac{64}{\left| 2 x+(-1)^k (2 k+2 x-4 y+5)-1\right| ^3}f\left(\frac{4 (-1)^k}{2 x+(-1)^k (2 k+2 x-4 y+5)-1},\frac{2 \left(2 x+(-1)^k-1\right)}{2 x+(-1)^k (2 k+2 x-4 y+5)-1}\right) \\\hline
 (e,123,e) & \sum_{k=0}^{\infty}\frac{1}{(-x+k (-x+y+1)+2)^3}f\left(\frac{1}{-x k+y k+k-x+2},\frac{-x+y+1}{-x k+y k+k-x+2}\right) \\\hline
 (e,123,12) & \sum_{k=0}^{\infty}\frac{1}{\left| x+k (x-y-1)-2\right| ^3}f\left(\frac{(-1)^k \left(-3 x+(-1)^k (3 x+2 k (x-y-1)-y-5)+y+1\right)}{4 (x+k (x-y-1)-2)},\frac{-x+y+1}{-x k+y k+k-x+2}\right) \\\hline
 (e,123,23) & \sum_{k=0}^{\infty}\frac{64}{\left| -2 x+(-1)^k (2 k-2 x-2 y+7)+2 y+1\right| ^3}f\left(\frac{4 (-1)^k}{-2 x+(-1)^k (2 k-2 x-2 y+7)+2 y+1},-\frac{2 \left(-2 x+(-1)^k+2 y+1\right)}{2 x-2 y+(-1)^k (-2 k+2 x+2
   y-7)-1}\right) \\\hline
 (e,132,e) & \sum_{k=0}^{\infty}\frac{1}{(k+x-(k+1) y+1)^3}f\left(\frac{1}{k+x-(k+1) y+1},\frac{y-1}{-x+k (y-1)+y-1}\right) \\\hline
 (e,132,12) & \sum_{k=0}^{\infty}\frac{1}{(k+x-(k+1) y+1)^3}f\left(\frac{(-1)^k \left(2 x-3 y+(-1)^k (-2 x+2 k (y-1)+3 y-3)-1\right)}{4 (k+1) y-4 (k+x+1)},\frac{y-1}{-x+k (y-1)+y-1}\right) \\\hline
 (e,132,23) & \sum_{k=0}^{\infty}\frac{64}{\left| (-1)^k (2 k+4 x-2 y+3)-2 y+1\right| ^3}f\left(\frac{4 (-1)^k}{(-1)^k (2 k+4 x-2 y+3)-2 y+1},-\frac{2 \left(-2 y+(-1)^k+1\right)}{(-1)^{k+1} (2 k+4 x-2 y+3)+2 y-1}\right)
   \\
\end{array}$}$$

$$\scalebox{0.9}{$\begin{array}{c|c}
 (12,e,e) & \sum_{k=0}^{\infty}\frac{1}{(k x+y+1)^3}f\left(\frac{(-1)^k \left(x+2 y+(-1)^k (2 k x+3 x+2 y+2)-2\right)}{4 (k x+y+1)},\frac{x}{k x+y+1}\right) \\\hline
 (12,e,12) & \sum_{k=0}^{\infty}\frac{1}{(k x+y+1)^3}f\left(\frac{k x+x+y}{k x+y+1},\frac{x}{k x+y+1}\right) \\\hline
 (12,e,123) & \sum_{k=0}^{\infty}\frac{64}{\left| 2 x+(-1)^k (2 k-2 x+4 y+5)-1\right| ^3}f\left(\frac{6 x+(-1)^k (2 k-2 x+4 y+3)-3}{2 x+(-1)^k (2 k-2 x+4 y+5)-1},-\frac{2 \left(2 x+(-1)^k-1\right)}{-2 x+(-1)^{k+1} (2 k-2
   x+4 y+5)+1}\right) \\\hline
 (12,12,e) & \sum_{k=0}^{\infty}\frac{1}{(-x k+y k+k+y+1)^3}f\left(-\frac{(-1)^k \left(x-3 y+(-1)^k (3 x+2 k (x-y-1)-5 (y+1))+1\right)}{4 (y+k (-x+y+1)+1)},\frac{-x+y+1}{-x k+y k+k+y+1}\right) \\\hline
 (12,12,12) & \sum_{k=0}^{\infty}\frac{1}{(-x k+y k+k+y+1)^3}f\left(\frac{k-(k+1) x+(k+2) y+1}{y+k (-x+y+1)+1},\frac{-x+y+1}{-x k+y k+k+y+1}\right) \\\hline
 (12,12,123) & \sum_{k=0}^{\infty}\frac{64}{\left| -2 x+2 y+(-1)^k (2 k+2 x+2 y+3)+1\right| ^3}f\left(\frac{-6 x+6 y+(-1)^k (2 k+2 x+2 y+1)+3}{-2 x+2 y+(-1)^k (2 k+2 x+2 y+3)+1},\frac{2 \left(-2 x+(-1)^k+2 y+1\right)}{-2 x+2
   y+(-1)^k (2 k+2 x+2 y+3)+1}\right) \\\hline
 (12,13,e) & \sum_{k=0}^{\infty}\frac{1}{\left| x+k (y-1)-2\right| ^3}f\left(\frac{(-1)^k \left(2 x+y+(-1)^k (2 x+2 k (y-1)+3 y-7)-1\right)}{4 (x+k (y-1)-2)},\frac{y-1}{x+k (y-1)-2}\right) \\\hline
 (12,13,12) & \sum_{k=0}^{\infty}-\frac{1}{(x+k (y-1)-2)^3}f\left(\frac{x+k (y-1)+y-2}{x+k (y-1)-2},\frac{y-1}{x+k (y-1)-2}\right) \\\hline
 (12,13,123) & \sum_{k=0}^{\infty}\frac{64}{\left| -2 y+(-1)^k (2 k-4 x+2 y+7)+1\right| ^3}f\left(\frac{-6 y+(-1)^k (2 k-4 x+2 y+5)+3}{-2 y+(-1)^k (2 k-4 x+2 y+7)+1},\frac{2 \left(-2 y+(-1)^k+1\right)}{-2 y+(-1)^k (2 k-4 x+2
   y+7)+1}\right) \\\hline
 (12,23,e) & \sum_{k=0}^{\infty}\frac{1}{(k x+x-y+1)^3}f\left(\frac{(-1)^k \left(3 x+(-1)^k ((2 k+5) x-2 y+2)-2 y-2\right)}{4 (k x+x-y+1)},\frac{x}{k x+x-y+1}\right) \\\hline
 (12,23,12) & \sum_{k=0}^{\infty}\frac{1}{(k x+x-y+1)^3}f\left(\frac{(k+2) x-y}{k x+x-y+1},\frac{x}{k x+x-y+1}\right) \\\hline
 (12,23,123) & \sum_{k=0}^{\infty}\frac{64}{\left| 2 x+(-1)^k (2 k+2 x-4 y+5)-1\right| ^3}f\left(\frac{6 x+(-1)^k (2 k+2 x-4 y+3)-3}{2 x+(-1)^k (2 k+2 x-4 y+5)-1},\frac{2 \left(2 x+(-1)^k-1\right)}{2 x+(-1)^k (2 k+2 x-4
   y+5)-1}\right) \\\hline
 (12,123,e) & \sum_{k=0}^{\infty}\frac{1}{\left| x+k (x-y-1)-2\right| ^3}f\left(\frac{(-1)^k \left(3 x+(-1)^k (5 x+2 k (x-y-1)-3 y-7)-y-1\right)}{4 (x+k (x-y-1)-2)},\frac{-x+y+1}{-x k+y k+k-x+2}\right) \\\hline
 (12,123,12) & \sum_{k=0}^{\infty}\frac{1}{(-x+k (-x+y+1)+2)^3}f\left(\frac{(k+2) (x-1)-(k+1) y}{x+k (x-y-1)-2},\frac{-x+y+1}{-x k+y k+k-x+2}\right) \\\hline
 (12,123,123) & \sum_{k=0}^{\infty}\frac{64}{\left| -2 x+(-1)^k (2 k-2 x-2 y+7)+2 y+1\right| ^3}f\left(\frac{-6 x+(-1)^k (2 k-2 x-2 y+5)+6 y+3}{-2 x+(-1)^k (2 k-2 x-2 y+7)+2 y+1},-\frac{2 \left(-2 x+(-1)^k+2 y+1\right)}{2 x-2
   y+(-1)^k (-2 k+2 x+2 y-7)-1}\right) \\\hline
 (12,132,e) & \sum_{k=0}^{\infty}\frac{1}{(k+x-(k+1) y+1)^3}f\left(\frac{(-1)^k \left(-2 x+3 y+(-1)^k (-2 x+2 k (y-1)+5 y-5)+1\right)}{4 (k+1) y-4 (k+x+1)},\frac{y-1}{-x+k (y-1)+y-1}\right) \\\hline
 (12,132,12) & \sum_{k=0}^{\infty}\frac{1}{(k+x-(k+1) y+1)^3}f\left(\frac{k+x-(k+2) y+1}{k+x-(k+1) y+1},\frac{y-1}{-x+k (y-1)+y-1}\right) \\\hline
 (12,132,123) & \sum_{k=0}^{\infty}\frac{64}{\left| (-1)^k (2 k+4 x-2 y+3)-2 y+1\right| ^3}f\left(\frac{(-1)^k (2 k+4 x-2 y+1)-6 y+3}{(-1)^k (2 k+4 x-2 y+3)-2 y+1},-\frac{2 \left(-2 y+(-1)^k+1\right)}{(-1)^{k+1} (2 k+4 x-2
   y+3)+2 y-1}\right) \\
\end{array}$}$$

$$\scalebox{0.9}{$\begin{array}{c|c}
 (13,e,13) & \sum_{k=0}^{\infty}\frac{1}{(k x+y+1)^3}f\left(1-\frac{x}{k x+y+1},1-\frac{1}{k x+y+1}\right) \\\hline
 (13,e,123) & \sum_{k=0}^{\infty}\frac{1}{(k x+y+1)^3}f\left(1-\frac{x}{k x+y+1},\frac{(-1)^k \left(x+2 y+(-1)^k (2 k x-x+2 y+2)-2\right)}{4 (k x+y+1)}\right) \\\hline
 (13,e,132) & \sum_{k=0}^{\infty}\frac{64}{\left| 2 x+(-1)^k (2 k-2 x+4 y+5)-1\right| ^3}f\left(\frac{-2 x+(-1)^k (2 k-2 x+4 y+3)+1}{2 x+(-1)^k (2 k-2 x+4 y+5)-1},\frac{2 x+(-1)^k (2 k-2 x+4 y+1)-1}{2 x+(-1)^k (2 k-2 x+4
   y+5)-1}\right) \\\hline
 (13,12,13) & \sum_{k=0}^{\infty}\frac{1}{(-x k+y k+k+y+1)^3}f\left(\frac{-x k+y k+k+x}{-x k+y k+k+y+1},1-\frac{1}{-x k+y k+k+y+1}\right) \\\hline
 (13,12,123) & \sum_{k=0}^{\infty}\frac{1}{(-x k+y k+k+y+1)^3}f\left(\frac{-x k+y k+k+x}{-x k+y k+k+y+1},\frac{x+(-1)^{k+1} (x-3 y+1)+y+2 k (-x+y+1)+1}{4 (y+k (-x+y+1)+1)}\right) \\\hline
 (13,12,132) & \sum_{k=0}^{\infty}\frac{64}{\left| -2 x+2 y+(-1)^k (2 k+2 x+2 y+3)+1\right| ^3}f\left(\frac{2 x-2 y+(-1)^k (2 k+2 x+2 y+1)-1}{-2 x+2 y+(-1)^k (2 k+2 x+2 y+3)+1},\frac{-2 x+2 y+(-1)^k (2 k+2 x+2 y-1)+1}{-2 x+2
   y+(-1)^k (2 k+2 x+2 y+3)+1}\right) \\\hline
 (13,13,13) & \sum_{k=0}^{\infty}-\frac{1}{(x+k (y-1)-2)^3}f\left(\frac{x+k (y-1)-y-1}{x+k (y-1)-2},1+\frac{1}{x+k (y-1)-2}\right) \\\hline
 (13,13,123) & \sum_{k=0}^{\infty}\frac{1}{\left| x+k (y-1)-2\right| ^3}f\left(\frac{x+k (y-1)-y-1}{x+k (y-1)-2},\frac{(-1)^k \left(2 x+(-1)^k (2 x+2 k (y-1)-y-3)+y-1\right)}{4 (x+k (y-1)-2)}\right) \\\hline
 (13,13,132) & \sum_{k=0}^{\infty}\frac{64}{\left| -2 y+(-1)^k (2 k-4 x+2 y+7)+1\right| ^3}f\left(\frac{2 y+(-1)^k (2 k-4 x+2 y+5)-1}{-2 y+(-1)^k (2 k-4 x+2 y+7)+1},\frac{-2 y+(-1)^k (2 k-4 x+2 y+3)+1}{-2 y+(-1)^k (2 k-4 x+2
   y+7)+1}\right) \\\hline
 (13,23,13) & \sum_{k=0}^{\infty}\frac{1}{(k x+x-y+1)^3}f\left(1-\frac{x}{k x+x-y+1},1+\frac{1}{-(k+1) x+y-1}\right) \\\hline
 (13,23,123) & \sum_{k=0}^{\infty}\frac{1}{(k x+x-y+1)^3}f\left(1-\frac{x}{k x+x-y+1},\frac{(-1)^k \left(3 x+(-1)^k (2 k x+x-2 y+2)-2 y-2\right)}{4 (k x+x-y+1)}\right) \\\hline
 (13,23,132) & \sum_{k=0}^{\infty}\frac{64}{\left| 2 x+(-1)^k (2 k+2 x-4 y+5)-1\right| ^3}f\left(\frac{-2 x+(-1)^k (2 k+2 x-4 y+3)+1}{2 x+(-1)^k (2 k+2 x-4 y+5)-1},\frac{2 x+(-1)^k (2 k+2 x-4 y+1)-1}{2 x+(-1)^k (2 k+2 x-4
   y+5)-1}\right) \\\hline
 (13,123,13) & \sum_{k=0}^{\infty}\frac{1}{(-x+k (-x+y+1)+2)^3}f\left(\frac{k (x-y-1)+y-1}{x+k (x-y-1)-2},1+\frac{1}{x+k (x-y-1)-2}\right) \\\hline
 (13,123,123) & \sum_{k=0}^{\infty}\frac{1}{\left| x+k (x-y-1)-2\right| ^3}f\left(\frac{k (x-y-1)+y-1}{x+k (x-y-1)-2},\frac{(-1)^k \left(3 x-y+(-1)^k (x+2 k (x-y-1)+y-3)-1\right)}{4 (x+k (x-y-1)-2)}\right) \\\hline
 (13,123,132) & \sum_{k=0}^{\infty}\frac{64}{\left| -2 x+(-1)^k (2 k-2 x-2 y+7)+2 y+1\right| ^3}f\left(\frac{2 x+(-1)^k (2 k-2 x-2 y+5)-2 y-1}{-2 x+(-1)^k (2 k-2 x-2 y+7)+2 y+1},\frac{-2 x+(-1)^k (2 k-2 x-2 y+3)+2 y+1}{-2
   x+(-1)^k (2 k-2 x-2 y+7)+2 y+1}\right) \\\hline
 (13,132,13) & \sum_{k=0}^{\infty}\frac{1}{(k+x-(k+1) y+1)^3}f\left(\frac{-y k+k+x}{k+x-(k+1) y+1},1+\frac{1}{-x+k (y-1)+y-1}\right) \\\hline
 (13,132,123) & \sum_{k=0}^{\infty}\frac{1}{(k+x-(k+1) y+1)^3}f\left(\frac{-y k+k+x}{k+x-(k+1) y+1},\frac{(-1)^k \left(-2 x+3 y+(-1)^k (-2 x+2 k (y-1)+y-1)+1\right)}{4 (k+1) y-4 (k+x+1)}\right) \\\hline
 (13,132,132) & \sum_{k=0}^{\infty}\frac{64}{\left| (-1)^k (2 k+4 x-2 y+3)-2 y+1\right| ^3}f\left(\frac{(-1)^k (2 k+4 x-2 y+1)+2 y-1}{(-1)^k (2 k+4 x-2 y+3)-2 y+1},\frac{(-1)^k (2 k+4 x-2 y-1)-2 y+1}{(-1)^k (2 k+4 x-2 y+3)-2
   y+1}\right) \\
\end{array}$}$$

$$\scalebox{0.85}{$\begin{array}{c|c}
 (23,e,e) & \sum_{k=0}^{\infty}\frac{64}{\left| 2 x+(-1)^k (2 k-2 x+4 y+5)-1\right| ^3}f\left(\frac{4 (-1)^k}{2 x+(-1)^k (2 k-2 x+4 y+5)-1},-\frac{2 \left(-2 x+(-1)^k+1\right)}{-2 x+(-1)^{k+1} (2 k-2 x+4
   y+5)+1}\right) \\\hline
 (23,e,23) & \sum_{k=0}^{\infty}\frac{1}{(k x+y+1)^3}f\left(\frac{1}{k x+y+1},\frac{1-x}{k x+y+1}\right) \\\hline
 (23,e,132) & \sum_{k=0}^{\infty}\frac{1}{(k x+y+1)^3}f\left(\frac{(-1)^k \left(-x-2 y+(-1)^k (2 k x+x+2 y+2)+2\right)}{4 (k x+y+1)},\frac{(-1)^k \left(-x-2 y+(-1)^k (2 k x-3 x+2 y+2)+2\right)}{4 (k x+y+1)}\right) \\\hline
 (23,12,e) & \sum_{k=0}^{\infty}\frac{64}{\left| -2 x+2 y+(-1)^k (2 k+2 x+2 y+3)+1\right| ^3}f\left(\frac{4 (-1)^k}{-2 x+2 y+(-1)^k (2 k+2 x+2 y+3)+1},\frac{2 \left(2 x+(-1)^k-2 y-1\right)}{-2 x+2 y+(-1)^k (2 k+2 x+2
   y+3)+1}\right) \\\hline
 (23,12,23) & \sum_{k=0}^{\infty}\frac{1}{(-x k+y k+k+y+1)^3}f\left(\frac{1}{-x k+y k+k+y+1},\frac{x-y}{-x k+y k+k+y+1}\right) \\\hline
 (23,12,132) & \sum_{k=0}^{\infty}\frac{1}{(-x k+y k+k+y+1)^3}f\left(\frac{-x+(-1)^k (x-3 y+1)+3 y+2 k (-x+y+1)+3}{4 (y+k (-x+y+1)+1)},\frac{(-1)^k \left(x-3 y+(-1)^{k+1} (-3 x+2 k (x-y-1)+y+1)+1\right)}{4 (y+k (-x+y+1)+1)}\right) \\\hline
 (23,13,e) & \sum_{k=0}^{\infty}\frac{64}{\left| -2 y+(-1)^k (2 k-4 x+2 y+7)+1\right| ^3}f\left(\frac{4 (-1)^k}{-2 y+(-1)^k (2 k-4 x+2 y+7)+1},\frac{2 \left(2 y+(-1)^k-1\right)}{-2 y+(-1)^k (2 k-4 x+2 y+7)+1}\right) \\\hline
 (23,13,23) & \sum_{k=0}^{\infty}\frac{1}{(-y k+k-x+2)^3}f\left(\frac{1}{-y k+k-x+2},\frac{y}{-y k+k-x+2}\right) \\\hline
 (23,13,132) & \sum_{k=0}^{\infty}\frac{1}{\left| x+k (y-1)-2\right| ^3}f\left(\frac{(-1)^k \left(-2 x-y+(-1)^k (2 x+2 k (y-1)+y-5)+1\right)}{4 (x+k (y-1)-2)},\frac{(-1)^k \left(-2 x+(-1)^k (2 x+2 k (y-1)-3 y-1)-y+1\right)}{4 (x+k
   (y-1)-2)}\right) \\\hline
 (23,23,e) & \sum_{k=0}^{\infty}\frac{64}{\left| 2 x+(-1)^k (2 k+2 x-4 y+5)-1\right| ^3}f\left(\frac{4 (-1)^k}{2 x+(-1)^k (2 k+2 x-4 y+5)-1},\frac{2 \left(-2 x+(-1)^k+1\right)}{2 x+(-1)^k (2 k+2 x-4 y+5)-1}\right) \\\hline
 (23,23,23) & \sum_{k=0}^{\infty}\frac{1}{(k x+x-y+1)^3}f\left(\frac{1}{k x+x-y+1},\frac{1-x}{k x+x-y+1}\right) \\\hline
 (23,23,132) & \sum_{k=0}^{\infty}\frac{1}{(k x+x-y+1)^3}f\left(\frac{(-1)^k \left(-3 x+(-1)^k ((2 k+3) x-2 y+2)+2 y+2\right)}{4 (k x+x-y+1)},\frac{(-1)^k \left(-3 x+(-1)^k ((2 k-1) x-2 y+2)+2 y+2\right)}{4 (k x+x-y+1)}\right) \\\hline
 (23,123,e) & \sum_{k=0}^{\infty}\frac{64}{\left| -2 x+(-1)^k (2 k-2 x-2 y+7)+2 y+1\right| ^3}f\left(\frac{4 (-1)^k}{-2 x+(-1)^k (2 k-2 x-2 y+7)+2 y+1},\frac{-4 x-2 (-1)^k+4 y+2}{2 x-2 y+(-1)^k (-2 k+2 x+2 y-7)-1}\right)
   \\\hline
 (23,123,23) & \sum_{k=0}^{\infty}\frac{1}{(-x+k (-x+y+1)+2)^3}f\left(\frac{1}{-x k+y k+k-x+2},\frac{y-x}{x+k (x-y-1)-2}\right) \\\hline
 (23,123,132) & \sum_{k=0}^{\infty}\frac{1}{\left| x+k (x-y-1)-2\right| ^3}f\left(\frac{(-1)^k \left(-3 x+(-1)^k (3 x+2 k (x-y-1)-y-5)+y+1\right)}{4 (x+k (x-y-1)-2)},\frac{(-1)^k \left(-3 x+y+(-1)^k (-x+2 k (x-y-1)+3 y-1)+1\right)}{4 (x+k
   (x-y-1)-2)}\right) \\\hline
 (23,132,e) & \sum_{k=0}^{\infty}\frac{64}{\left| (-1)^k (2 k+4 x-2 y+3)-2 y+1\right| ^3}f\left(\frac{4 (-1)^k}{(-1)^k (2 k+4 x-2 y+3)-2 y+1},-\frac{2 \left(2 y+(-1)^k-1\right)}{(-1)^{k+1} (2 k+4 x-2 y+3)+2 y-1}\right) \\\hline
 (23,132,23) & \sum_{k=0}^{\infty}\frac{1}{(k+x-(k+1) y+1)^3}f\left(\frac{1}{k+x-(k+1) y+1},\frac{y}{k+x-(k+1) y+1}\right) \\\hline
 (23,132,132) & \sum_{k=0}^{\infty}\frac{1}{(k+x-(k+1) y+1)^3}f\left(\frac{(-1)^k \left(2 x-3 y+(-1)^k (-2 x+2 k (y-1)+3 y-3)-1\right)}{4 (k+1) y-4 (k+x+1)},\frac{(-1)^k \left(2 x-3 y+(-1)^{k+1} (2 x-2 k (y-1)+y-1)-1\right)}{4 (k+1) y-4
   (k+x+1)}\right) \\
\end{array}$}$$

$$\scalebox{0.85}{$\begin{array}{c|c}
 (123,e,13) & \sum_{k=0}^{\infty}\frac{64}{\left| 2 x+(-1)^k (2 k-2 x+4 y+5)-1\right| ^3}f\left(\frac{6 x+(-1)^k (2 k-2 x+4 y+3)-3}{2 x+(-1)^k (2 k-2 x+4 y+5)-1},\frac{2 x+(-1)^k (2 k-2 x+4 y+1)-1}{2 x+(-1)^k (2 k-2 x+4
   y+5)-1}\right) \\\hline
 (123,e,23) & \sum_{k=0}^{\infty}\frac{1}{(k x+y+1)^3}f\left(\frac{(-1)^k \left(x+2 y+(-1)^k (2 k x+3 x+2 y+2)-2\right)}{4 (k x+y+1)},\frac{(-1)^k \left(x+2 y+(-1)^k (2 k x-x+2 y+2)-2\right)}{4 (k x+y+1)}\right) \\\hline
 (123,e,132) & \sum_{k=0}^{\infty}\frac{1}{(k x+y+1)^3}f\left(\frac{k x+x+y}{k x+y+1},1-\frac{1}{k x+y+1}\right) \\\hline
 (123,12,13) & \sum_{k=0}^{\infty}\frac{64}{\left| -2 x+2 y+(-1)^k (2 k+2 x+2 y+3)+1\right| ^3}f\left(\frac{-6 x+6 y+(-1)^k (2 k+2 x+2 y+1)+3}{-2 x+2 y+(-1)^k (2 k+2 x+2 y+3)+1},\frac{-2 x+2 y+(-1)^k (2 k+2 x+2 y-1)+1}{-2 x+2
   y+(-1)^k (2 k+2 x+2 y+3)+1}\right) \\\hline
 (123,12,23) & \sum_{k=0}^{\infty}\frac{1}{(-x k+y k+k+y+1)^3}f\left(-\frac{(-1)^k \left(x-3 y+(-1)^k (3 x+2 k (x-y-1)-5 (y+1))+1\right)}{4 (y+k (-x+y+1)+1)},\frac{x+(-1)^{k+1} (x-3 y+1)+y+2 k (-x+y+1)+1}{4 (y+k (-x+y+1)+1)}\right) \\\hline
 (123,12,132) & \sum_{k=0}^{\infty}\frac{1}{(-x k+y k+k+y+1)^3}f\left(\frac{k-(k+1) x+(k+2) y+1}{y+k (-x+y+1)+1},1-\frac{1}{-x k+y k+k+y+1}\right) \\\hline
 (123,13,13) & \sum_{k=0}^{\infty}\frac{64}{\left| -2 y+(-1)^k (2 k-4 x+2 y+7)+1\right| ^3}f\left(\frac{-6 y+(-1)^k (2 k-4 x+2 y+5)+3}{-2 y+(-1)^k (2 k-4 x+2 y+7)+1},\frac{-2 y+(-1)^k (2 k-4 x+2 y+3)+1}{-2 y+(-1)^k (2 k-4 x+2
   y+7)+1}\right) \\\hline
 (123,13,23) & \sum_{k=0}^{\infty}\frac{1}{\left| x+k (y-1)-2\right| ^3}f\left(\frac{(-1)^k \left(2 x+y+(-1)^k (2 x+2 k (y-1)+3 y-7)-1\right)}{4 (x+k (y-1)-2)},\frac{(-1)^k \left(2 x+(-1)^k (2 x+2 k (y-1)-y-3)+y-1\right)}{4 (x+k
   (y-1)-2)}\right) \\\hline
 (123,13,132) & \sum_{k=0}^{\infty}-\frac{1}{(x+k (y-1)-2)^3}f\left(\frac{x+k (y-1)+y-2}{x+k (y-1)-2},1+\frac{1}{x+k (y-1)-2}\right) \\\hline
 (123,23,13) & \sum_{k=0}^{\infty}\frac{64}{\left| 2 x+(-1)^k (2 k+2 x-4 y+5)-1\right| ^3}f\left(\frac{6 x+(-1)^k (2 k+2 x-4 y+3)-3}{2 x+(-1)^k (2 k+2 x-4 y+5)-1},\frac{2 x+(-1)^k (2 k+2 x-4 y+1)-1}{2 x+(-1)^k (2 k+2 x-4
   y+5)-1}\right) \\\hline
 (123,23,23) & \sum_{k=0}^{\infty}\frac{1}{(k x+x-y+1)^3}f\left(\frac{(-1)^k \left(3 x+(-1)^k ((2 k+5) x-2 y+2)-2 y-2\right)}{4 (k x+x-y+1)},\frac{(-1)^k \left(3 x+(-1)^k (2 k x+x-2 y+2)-2 y-2\right)}{4 (k x+x-y+1)}\right) \\\hline
 (123,23,132) & \sum_{k=0}^{\infty}\frac{1}{(k x+x-y+1)^3}f\left(\frac{(k+2) x-y}{k x+x-y+1},1+\frac{1}{-(k+1) x+y-1}\right) \\\hline
 (123,123,13) & \sum_{k=0}^{\infty}\frac{64}{\left| -2 x+(-1)^k (2 k-2 x-2 y+7)+2 y+1\right| ^3}f\left(\frac{-6 x+(-1)^k (2 k-2 x-2 y+5)+6 y+3}{-2 x+(-1)^k (2 k-2 x-2 y+7)+2 y+1},\frac{-2 x+(-1)^k (2 k-2 x-2 y+3)+2 y+1}{-2
   x+(-1)^k (2 k-2 x-2 y+7)+2 y+1}\right) \\\hline
 (123,123,23) & \sum_{k=0}^{\infty}\frac{1}{\left| x+k (x-y-1)-2\right| ^3}f\left(\frac{(-1)^k \left(3 x+(-1)^k (5 x+2 k (x-y-1)-3 y-7)-y-1\right)}{4 (x+k (x-y-1)-2)},\frac{(-1)^k \left(3 x-y+(-1)^k (x+2 k (x-y-1)+y-3)-1\right)}{4 (x+k
   (x-y-1)-2)}\right) \\\hline
 (123,123,132) & \sum_{k=0}^{\infty}\frac{1}{(-x+k (-x+y+1)+2)^3}f\left(\frac{(k+2) (x-1)-(k+1) y}{x+k (x-y-1)-2},1+\frac{1}{x+k (x-y-1)-2}\right) \\\hline
 (123,132,13) & \sum_{k=0}^{\infty}\frac{64}{\left| (-1)^k (2 k+4 x-2 y+3)-2 y+1\right| ^3}f\left(\frac{(-1)^k (2 k+4 x-2 y+1)-6 y+3}{(-1)^k (2 k+4 x-2 y+3)-2 y+1},\frac{(-1)^k (2 k+4 x-2 y-1)-2 y+1}{(-1)^k (2 k+4 x-2 y+3)-2
   y+1}\right) \\\hline
 (123,132,23) & \sum_{k=0}^{\infty}\frac{1}{(k+x-(k+1) y+1)^3}f\left(\frac{(-1)^k \left(-2 x+3 y+(-1)^k (-2 x+2 k (y-1)+5 y-5)+1\right)}{4 (k+1) y-4 (k+x+1)},\frac{(-1)^k \left(-2 x+3 y+(-1)^k (-2 x+2 k (y-1)+y-1)+1\right)}{4 (k+1) y-4
   (k+x+1)}\right) \\\hline
 (123,132,132) & \sum_{k=0}^{\infty}\frac{1}{(k+x-(k+1) y+1)^3}f\left(\frac{k+x-(k+2) y+1}{k+x-(k+1) y+1},1+\frac{1}{-x+k (y-1)+y-1}\right) \\
\end{array}$}$$

$$\scalebox{0.9}{$\begin{array}{c|c}
 (132,e,12) & \sum_{k=0}^{\infty}\frac{64}{\left| 2 x+(-1)^k (2 k-2 x+4 y+5)-1\right| ^3}f\left(\frac{-2 x+(-1)^k (2 k-2 x+4 y+3)+1}{2 x+(-1)^k (2 k-2 x+4 y+5)-1},-\frac{2 \left(-2 x+(-1)^k+1\right)}{-2 x+(-1)^{k+1} (2
   k-2 x+4 y+5)+1}\right) \\\hline
 (132,e,13) & \sum_{k=0}^{\infty}\frac{1}{(k x+y+1)^3}f\left(1-\frac{x}{k x+y+1},\frac{(-1)^k \left(-x-2 y+(-1)^k (2 k x-3 x+2 y+2)+2\right)}{4 (k x+y+1)}\right) \\\hline
 (132,e,123) & \sum_{k=0}^{\infty}\frac{1}{(k x+y+1)^3}f\left(1-\frac{x}{k x+y+1},\frac{1-x}{k x+y+1}\right) \\\hline
 (132,12,12) & \sum_{k=0}^{\infty}\frac{64}{\left| -2 x+2 y+(-1)^k (2 k+2 x+2 y+3)+1\right| ^3}f\left(\frac{2 x-2 y+(-1)^k (2 k+2 x+2 y+1)-1}{-2 x+2 y+(-1)^k (2 k+2 x+2 y+3)+1},\frac{2 \left(2 x+(-1)^k-2 y-1\right)}{-2 x+2
   y+(-1)^k (2 k+2 x+2 y+3)+1}\right) \\\hline
 (132,12,13) & \sum_{k=0}^{\infty}\frac{1}{(-x k+y k+k+y+1)^3}f\left(\frac{-x k+y k+k+x}{-x k+y k+k+y+1},\frac{(-1)^k \left(x-3 y+(-1)^{k+1} (-3 x+2 k (x-y-1)+y+1)+1\right)}{4 (y+k (-x+y+1)+1)}\right) \\\hline
 (132,12,123) & \sum_{k=0}^{\infty}\frac{1}{(-x k+y k+k+y+1)^3}f\left(\frac{-x k+y k+k+x}{-x k+y k+k+y+1},\frac{x-y}{-x k+y k+k+y+1}\right) \\\hline
 (132,13,12) & \sum_{k=0}^{\infty}\frac{64}{\left| -2 y+(-1)^k (2 k-4 x+2 y+7)+1\right| ^3}f\left(\frac{2 y+(-1)^k (2 k-4 x+2 y+5)-1}{-2 y+(-1)^k (2 k-4 x+2 y+7)+1},\frac{2 \left(2 y+(-1)^k-1\right)}{-2 y+(-1)^k (2 k-4 x+2
   y+7)+1}\right) \\\hline
 (132,13,13) & \sum_{k=0}^{\infty}\frac{1}{\left| x+k (y-1)-2\right| ^3}f\left(\frac{x+k (y-1)-y-1}{x+k (y-1)-2},\frac{(-1)^k \left(-2 x+(-1)^k (2 x+2 k (y-1)-3 y-1)-y+1\right)}{4 (x+k (y-1)-2)}\right) \\\hline
 (132,13,123) & \sum_{k=0}^{\infty}-\frac{1}{(x+k (y-1)-2)^3}f\left(\frac{x+k (y-1)-y-1}{x+k (y-1)-2},\frac{y}{-y k+k-x+2}\right) \\\hline
 (132,23,12) & \sum_{k=0}^{\infty}\frac{64}{\left| 2 x+(-1)^k (2 k+2 x-4 y+5)-1\right| ^3}f\left(\frac{-2 x+(-1)^k (2 k+2 x-4 y+3)+1}{2 x+(-1)^k (2 k+2 x-4 y+5)-1},\frac{2 \left(-2 x+(-1)^k+1\right)}{2 x+(-1)^k (2 k+2 x-4
   y+5)-1}\right) \\\hline
 (132,23,13) & \sum_{k=0}^{\infty}\frac{1}{(k x+x-y+1)^3}f\left(1-\frac{x}{k x+x-y+1},\frac{(-1)^k \left(-3 x+(-1)^k ((2 k-1) x-2 y+2)+2 y+2\right)}{4 (k x+x-y+1)}\right) \\\hline
 (132,23,123) & \sum_{k=0}^{\infty}\frac{1}{(k x+x-y+1)^3}f\left(1-\frac{x}{k x+x-y+1},\frac{1-x}{k x+x-y+1}\right) \\\hline
 (132,123,12) & \sum_{k=0}^{\infty}\frac{64}{\left| -2 x+(-1)^k (2 k-2 x-2 y+7)+2 y+1\right| ^3}f\left(\frac{2 x+(-1)^k (2 k-2 x-2 y+5)-2 y-1}{-2 x+(-1)^k (2 k-2 x-2 y+7)+2 y+1},\frac{-4 x-2 (-1)^k+4 y+2}{2 x-2 y+(-1)^k (-2 k+2
   x+2 y-7)-1}\right) \\\hline
 (132,123,13) & \sum_{k=0}^{\infty}\frac{1}{\left| x+k (x-y-1)-2\right| ^3}f\left(\frac{k (x-y-1)+y-1}{x+k (x-y-1)-2},\frac{(-1)^k \left(-3 x+y+(-1)^k (-x+2 k (x-y-1)+3 y-1)+1\right)}{4 (x+k (x-y-1)-2)}\right) \\\hline
 (132,123,123) & \sum_{k=0}^{\infty}\frac{1}{(-x+k (-x+y+1)+2)^3}f\left(\frac{k (x-y-1)+y-1}{x+k (x-y-1)-2},\frac{y-x}{x+k (x-y-1)-2}\right) \\\hline
 (132,132,12) & \sum_{k=0}^{\infty}\frac{64}{\left| (-1)^k (2 k+4 x-2 y+3)-2 y+1\right| ^3}f\left(\frac{(-1)^k (2 k+4 x-2 y+1)+2 y-1}{(-1)^k (2 k+4 x-2 y+3)-2 y+1},-\frac{2 \left(2 y+(-1)^k-1\right)}{(-1)^{k+1} (2 k+4 x-2 y+3)+2
   y-1}\right) \\\hline
 (132,132,13) & \sum_{k=0}^{\infty}\frac{1}{(k+x-(k+1) y+1)^3}f\left(\frac{-y k+k+x}{k+x-(k+1) y+1},\frac{(-1)^k \left(2 x-3 y+(-1)^{k+1} (2 x-2 k (y-1)+y-1)-1\right)}{4 (k+1) y-4 (k+x+1)}\right) \\\hline
 (132,132,123) & \sum_{k=0}^{\infty}\frac{1}{(k+x-(k+1) y+1)^3}f\left(\frac{-y k+k+x}{k+x-(k+1) y+1},\frac{y}{k+x-(k+1) y+1}\right) \\
\end{array}$}$$

\clearpage

\end{document}